\documentclass[psamsfonts, reqno, 11pt]{amsart}
\usepackage{fullpage,amssymb,stmaryrd}
\usepackage{lmodern}
\usepackage[T2A,T1]{fontenc}
\usepackage[utf8]{inputenc}
\usepackage[russian,english]{babel}
\usepackage{hyperref}
\usepackage[hyphenbreaks]{xurl}
\usepackage{amsmath}
\usepackage{xcolor}
\usepackage{xparse}
\usepackage{amsthm}
\usepackage{pgfplots}
\usepackage{mathrsfs}
\usepackage[autostyle, english = american]{csquotes}
\usepackage[backend=biber,style=alphabetic,giveninits=true,maxnames=50,autolang=other]{biblatex}
\usepackage{tikz-cd}
\usepackage[shortlabels]{enumitem}
\hypersetup{
pdftitle={The Beilinson--Bloch conjecture for some non-isotrivial varieties over global function fields},
pdfsubject={Mathematics, Algebraic Geometry, Arithmetic Geometry},
pdfauthor={Matt Broe},
pdfkeywords={},
breaklinks=true
}
\addbibresource{references.bib}

\newtheoremstyle{style}
{} 
{} 
{\itshape} 
{} 
{\bfseries} 
{.} 
{.5em} 
{} 

\newtheoremstyle{theoremnum}
{} 
{} 
{\itshape} 
{} 
{\bfseries} 
{.} 
{.5em} 
{\thmname{#1}\thmnote{ \bfseries #3}}

\theoremstyle{style}
\newcommand{\comment}[1]{}
\newtheorem{thm}{Theorem}[section]
\newtheorem{cor}[thm]{Corollary}
\newtheorem{prop}[thm]{Proposition}
\newtheorem{lem}[thm]{Lemma}
\newtheorem{conj}[thm]{Conjecture}

\newtheorem*{thm*}{Theorem}
\newtheorem*{cor*}{Corollary}
\newtheorem*{prop*}{Proposition}
\newtheorem*{lem*}{Lemma}
\newtheorem*{conj*}{Conjecture}
\newtheorem*{quest*}{Question}
\newtheorem*{claim*}{Claim}
\newtheorem*{ppty*}{Property}

\theoremstyle{theoremnum}

\theoremstyle{definition}
\newtheorem{defn}[thm]{Definition}

\newtheorem{exmp}[thm]{Example}

\theoremstyle{remark}
\newtheorem{rem}[thm]{Remark}
\newtheorem*{rem*}{Remark}

\makeatletter
\def\eqref{\@ifstar\@eqref\@@eqref}
\def\@eqref#1{\textup{\tagform@{\ref*{#1}}}}
\def\@@eqref#1{\textup{\tagform@{\ref{#1}}}}
\makeatother

\DeclareMathOperator{\Spec}{Spec}
\DeclareMathOperator{\Gal}{Gal}

\DeclareMathOperator{\trdeg}{trdeg}

\DeclareMathOperator{\Aut}{Aut}

\DeclareMathOperator{\Hom}{Hom}

\DeclareMathOperator{\id}{id}

\DeclareMathOperator{\Sing}{Sing}

\DeclareMathOperator{\CH}{CH}

\DeclareMathOperator{\BB}{BB}

\DeclareMathOperator{\Alb}{Alb}
\DeclareMathOperator*{\ord}{ord}
\DeclareMathOperator{\rk}{rk}

\DeclareMathOperator{\Frob}{Frob}

\DeclareMathOperator{\character}{char}
\DeclareMathOperator{\model}{mod}

\DeclareMathOperator{\num}{num}

\DeclareMathOperator{\et}{\acute{e}t}

\DeclareMathOperator{\Gr}{Gr}

\DeclareMathOperator{\cont}{cont}
\DeclareMathOperator{\ar}{ar}
\DeclareMathOperator{\TB}{TB}
\DeclareMathOperator{\Ab}{Ab}

\DeclareMathOperator{\Div}{Div}

\DeclareMathOperator{\spe}{sp}
\DeclareMathOperator{\Ev}{Ev}

\newcommand{\Z}{{\mathbb{Z}}}

\newcommand{\F}{{\mathbb{F}}}

\newcommand{\Q}{{\mathbb{Q}}}

\newcommand{\mf}[1]{\mathfrak{#1}}
\newcommand{\mc}[1]{\mathcal{#1}}

\newcommand{\ol}[1]{\overline{#1}}
\newcommand{\ul}[1]{\underline{#1}}

\numberwithin{equation}{section}

\setcounter{tocdepth}{1}

\title{The Beilinson--Bloch conjecture for some non-isotrivial varieties over global function fields}

\author{Matt Broe}

\begin{document}
\linespread{1.3}\selectfont
\begin{abstract}
        The Beilinson--Bloch conjecture is a generalization of the Birch and Swinnerton-Dyer conjecture, which relates the ranks of Chow groups of smooth projective varieties over global fields to the order of vanishing of $L$-functions. We prove the conjecture for certain classes of non-isotrivial varieties over $\F_q(t)$, including some cubic threefolds and fivefolds. We deduce the Birch and Swinnerton-Dyer conjecture for their intermediate Jacobians, and use it to establish new cases of the Tate conjecture over finite fields. We also prove further results on the arithmetic of these intermediate Jacobians. To that end, we show that a few classes of varieties over an arbitrary field have motive of abelian type, generalizing previously known examples over the complex numbers.
\end{abstract}

\maketitle

    \tableofcontents

    \section{Introduction}
    Let $X$ be a smooth projective variety over a global field $k$. The Beilinson--Bloch conjecture (\cite{Bloch1984}, \cite{Beilinson1987}) predicts that 
    \begin{gather}\label{eqBB}
        \dim_{\Q} \CH^i(X)_0 = \ord\limits_{s=i} L(H^{2i-1}(X_{\ol{k}}, \Q_\ell), s).
    \end{gather}
    Here $\CH^i(X)_0$ is the homologically trivial part of the codimension-$i$ Chow group of $X$ (where the Chow group is taken with rational coefficients), and $L(H^{2i-1}(X_{\ol{k}}, \Q_\ell), s)$ is the Hasse--Weil $L$-function associated with the degree-$(2i-1)$ $\ell$-adic cohomology group of $X$. For $X$ an abelian variety and $i = 1$, the equality \eqref{eqBB} is equivalent to the (weak) Birch and Swinnerton-Dyer conjecture for $X$. In general, few cases of the Beilinson--Bloch conjecture have been established unconditionally, particularly over number fields, where the $L$-function is not even known in general to have meromorphic continuation to $s = i$. We recall some existing results on the conjecture in subsection \ref{ssecBB}.
    \par
    In this paper we will be concerned with the Beilinson--Bloch conjecture in positive characteristic. Tate \cite{Tate1965} and Artin--Tate \cite{Tate1966b} recognized in the 1960s that the Birch and Swinnerton-Dyer conjecture over global function fields was closely related to the Tate conjecture for divisors on varieties over finite fields. They conjectured a special value formula for the zeta function of a smooth projective surface over a finite field, and predicted that under mild hypotheses, when the surface was fibered over a curve, the validity of that formula was equivalent to the strong Birch and Swinnerton-Dyer conjecture for the Jacobian of the generic fiber. This equivalence, termed Artin--Tate's Conjecture (d), was proved in the early 2000s by combining results of Artin--Tate, Artin--Grothendieck \cite{Grothendieck1968}, Milne \cite{Milne1986}, and Kato--Trihan \cite{KT2003}. See \cite{LRS2022} for some additional historical background, and two alternative proofs of Conjecture (d). 
    \par
    Geisser in \cite[Theorem 1.1]{Geisser2021} established a higher-dimensional generalization of Conjecture (d), which applies (under mild hypotheses) to any smooth proper variety $X$ over a finite field which is fibered over a curve. Geisser's theorem can be reformulated as saying that the Tate conjecture for divisors on $X$ is equivalent to the Tate conjecture for divisors on the generic fiber, together with the Birch and Swinnerton-Dyer conjecture for the Albanese variety of the generic fiber. Building on work of Jannsen \cite{Jannsen1990}, in \cite[Corollary 4.15]{Broe2025} we generalized part of Geisser's result to cycles of higher codimension, giving a sufficient condition for the Beilinson--Bloch conjecture to hold for a variety over a function field. From this we deduced some new cases of the conjecture for isotrivial varieties (similar isotrivial examples were known previously, such as those given in \cite[Remark 13.6.2]{Jannsen2007} and \cite[Corollary 1]{Kahn2021}). Here a variety over a global function field $K / \F_q$ is called isotrivial if its base change to $\ol{K}$ is defined over $\ol{\F}_q$.
    \par
    It remains to show that this method can prove cases of the Beilinson--Bloch conjecture in higher codimension for interesting non-isotrivial varieties. Since our criterion for the conjecture requires some control over the bad reduction of a variety over a function field, it is natural to seek examples where the singularities of the special fibers are as mild as possible. We are thus led to consider Lefschetz pencils. Recall that a Lefschetz pencil for a smooth projective variety $X$ is roughly a generically smooth family of hyperplane sections of $X$, parametrized by $\mathbb{P}^1$, such that each singular member of the family (over the algebraic closure) is smooth away from one ordinary double point singularity. They have long played a structural role in algebraic and arithmetic geometry, notably in Deligne's papers on the Weil conjectures (\cite{Deligne1974}, \cite{Deligne1980}). 
    \begin{thm}[Theorem \ref{thmBBCubicThreefold}]\label{thmBBCubicThreefoldIntro}
        Let $X$ be a smooth cubic fourfold over $\F_q$, and let $f: Y \to \mathbb{P}^1_{\F_q}$ be a Lefschetz pencil for $X$. Then the Beilinson--Bloch conjecture holds for the generic fiber of $f$.
    \end{thm}
    This establishes the conjecture for a class of smooth cubic threefolds over $\F_q(t)$. Under some stronger hypotheses on $X$, we use results of Jannsen \cite{Jannsen2007} to compute all motivic cohomology groups (with rational coefficients) of the generic fiber of $f$, in terms of certain simpler invariants related to \'etale cohomology (theorem \ref{thmBBM}). We also prove cases of the Beilinson--Bloch conjecture for cubic hypersurfaces of higher dimension (theorem \ref{thmBB3Cubic}), and establish the finiteness of the Albanese kernel for a class of surfaces over $\F_q(t)$ (corollary \ref{corCH2FiniteEx}). The varieties covered by these results are shown to be non-isotrivial in many cases (remark \ref{remNonIsotrivial}): for instance, the cubic threefolds of theorem \ref{thmBBCubicThreefoldIntro} are never isotrivial.
    \par
    The proof of theorem \ref{thmBBCubicThreefoldIntro} relies on the Tate conjecture for cubic fourfolds over finite fields, established unconditionally in \cite{Charles2013}, \cite{Madapusi2015}, and \cite{AKPW2025}. Note that a similar Lefschetz pencil argument was used by Charles--Pirutka \cite{CP2015} in their proof of the integral Tate conjecture for smooth cubic fourfolds in characteristic $\neq 2, 3$.
    \par 
    Smooth cubic threefolds over $\mathbb{C}$ have long been studied through their intermediate Jacobians, which are principally polarized abelian fivefolds constructed from the Hodge structures of the threefolds. Clemens--Griffiths \cite{CG1972} famously took this approach to show that smooth complex cubic threefolds are never rational, a theorem soon extended by Murre \cite{Murre1973} to base fields of characteristic not two. Later work of Murre \cite{Murre1985}, as generalized by Achter--Casalaina-Martin--Vial \cite{ACMV2023}, permits a definition of the intermediate Jacobian of a smooth cubic threefold $X$ over an arbitrary field. This abelian variety satisfies a certain universal property, and admits a concrete geometric interpretation: it is isomorphic to the Albanese variety of the Fano surface of lines on $X$ \cite[Proposition 5.54]{Ciurca2024}. Ciurca in loc. cit. has also announced a proof of the irrationality of smooth cubic threefolds in characteristic two: his manuscript is currently only available as a preprint, and we do not rely on any of its results in the present work.
    \par
    There is a certain relation between the Chow motive of a smooth cubic threefold over any field and that of its intermediate Jacobian (proposition \ref{propH3Cubic}). It formally follows that the Beilinson--Bloch conjecture for a smooth cubic threefold over a global field is equivalent to the Birch and Swinnerton-Dyer conjecture for its intermediate Jacobian. From this and \cite{KT2003}, we deduce the strong form of BSD for the intermediate Jacobians of the cubic threefolds treated in theorem \ref{thmBBCubicThreefoldIntro}. Since the rational $\ell$-adic Tate modules of these abelian varieties are controlled by the well-understood monodromy of Lefschetz pencils, we can make several other observations about their arithmetic. In particular, they have everywhere semistable reduction, and their geometric isogeny classes are simple and non-isotrivial (proposition \ref{propArithA}). We moreover explicitly compute their $L$-functions, and give an example of Mordell--Weil rank 28. In future work, we aim to undertake a more focused study of their BSD invariants. 
    \par
    We also show that a few other classes of smooth complete intersections over any field have associated abelian varieties, well-defined up to isogeny, which serve as motivic analogues of intermediate Jacobians (corollary \ref{corH5Cubic}). In characteristic zero, this follows from combining results of Vial \cite{Vial2013} and Achter--Casalaina-Martin--Vial \cite{ACMV2020}: the proof over a general base field then proceeds via a standard specialization argument. Similarly to the case of cubic threefolds, the Beilinson--Bloch conjecture for a smooth cubic fivefold is equivalent to BSD for the associated \enquote{intermediate Jacobian}. From theorem \ref{thmBB3Cubic}, we thus deduce BSD for the intermediate Jacobians of certain cubic fivefolds over $\F_q(t)$, and give examples of Mordell--Weil rank $\ge 86$.
    \par
    We conclude the paper by proving some new cases of the Tate conjecture over finite fields (theorem \ref{thmTate}). Specifically, we consider a smooth cubic threefold $X$ over a global function field, and assume that $X$ satisfies the Beilinson--Bloch conjecture (which holds unconditionally for the cubic threefolds of theorem \ref{thmBBCubicThreefoldIntro}). Using the Tate conjecture for the Fano surface $F$ of $X$, proved in \cite{Roulleau2013} and \cite[Remark 4.2]{DLR2017}, and the Birch and Swinnerton-Dyer conjecture for the intermediate Jacobian of $X$, combined with the backwards direction of the aforementioned theorem of Geisser, we establish the Tate conjecture for a certain auxiliary threefold over the constant field, obtained by spreading out $F$. 

    \subsection*{Structure of the paper}
    Section 2 recalls some results on the structure of Chow groups of certain varieties, particularly cubic hypersurfaces. Section 3 introduces tools from the theory of Chow motives, and uses them to relate the motive of a smooth cubic threefold with that of its intermediate Jacobian. We prove a similar relation for smooth cubic fivefolds, and a few other complete intersections. The latter proof uses some standard specialization techniques for Chow motives, which nevertheless have further applications which we did not find in the literature. Namely, we use them to generalize to arbitrary base fields several classes of varieties which are known over $\mathbb{C}$ to have finite-dimensional motive. These include the Fano varieties of lines on smooth cubic threefolds and fivefolds.
    \par
    In section 4 we state Jannsen's generalized Tate conjecture and the Beilinson--Bloch conjecture, and give sufficient conditions for them. Section 5 collects results from SGA 7 \cite{SGA7-II} and Weil II \cite{Deligne1980} on the monodromy of Lefschetz pencils. These permit an explicit computation (under mild hypotheses) of the $L$-function associated with the middle cohomology of the generic fiber of a Lefschetz pencil over $\F_q$, in terms of the zeta function of the total space of the pencil. Other than the definitions, the content of this section finds its primary application in section 7.
    \par
    Section 6 proves cases of the Beilinson--Bloch conjecture for generic fibers of certain Lefschetz pencils, essentially via geometric arguments. These are enabled by the simple geometry of the singular fibers of Lefschetz pencils of cubic hypersurfaces (lemma \ref{lemNodalCubic}). Section 7 deduces BSD for the intermediate Jacobian of a smooth cubic threefold over $\F_q(t)$ which arises from a Lefschetz pencil of a cubic fourfold, and uses it to obtain cases of the Tate conjecture over $\F_q$. Here we also employ the earlier results on monodromy of Lefschetz pencils to describe the arithmetic of the intermediate Jacobian, and compute its $L$-function. We prove analogous statements for the intermediate Jacobians of certain smooth cubic fivefolds.

    \subsection*{Notation and conventions}
    The letter $k$ will denote a field (arbitrary unless stated otherwise), and $\ol{k}$ will denote a choice of algebraic closure. We let $G_k = \Aut(\ol{k} / k)$ denote the absolute Galois group of $k$. A variety over $k$ is a finite-type separated integral $k$-scheme. Unless stated otherwise, $\ell$ will always mean a fixed prime not equal to the characteristic of the base field. Our conventions for motives follow Andr\'e's book \cite{Andre2004}, and also match those of the Stacks project \cite[Tag 0FG9]{stacks-project}. By default, all Chow groups and motives have rational coefficients, and are taken with respect to rational equivalence. For a smooth projective variety $X$, the Chow group modulo homological equivalence (with respect to $\ell$-adic cohomology) is denoted by $\CH^i(X)_{\hom}$, and the homologically trivial subgroup of $\CH^i(X)$ is denoted by $\CH^i(X)_0$. By $\CH_i(X)$ we mean the Chow group graded by dimension of cycles, rather than codimension. When $i < 0$, both $\CH^i(X)$ and $\CH_i(X)$ are defined to be zero. The category of motives over $k$ is denoted by $\mc{M}(k)$. The tensor unit in this category is denoted by $\mathbf{1}$. The motive of a smooth projective scheme $Y$ is denoted by $\mf{h}(Y)$. The dual of a motive $M$ is denoted by $M^\vee$, and the dual of a morphism $f: M \to N$ of motives is denoted by $f^\vee: N^\vee \to M^\vee$. The $n$-fold Tate twist of $f$ is denoted by $f(n): M(n) \to N(n)$. Our sign convention for Tate twists of motives is such that $\mf{h}(\mathbb{P}^1) \cong \mathbf{1} \oplus \mathbf{1}(-1)$. Subscripts on schemes, motives, or morphisms thereof denote base change. We will sometimes consider motives and Chow groups modulo other adequate equivalence relations, and also denote this by a subscript (e.g. if $M$ is a motive, then $M_{\num}$ denotes $M$ considered modulo numerical equivalence). If $R$ is a commutative ring, the Chow group with coefficients in $R$ is denoted by $\CH^i(X)_R$, and the category of motives with coefficients in $R$ is denoted by $\mc{M}(k)_R$. By $H^i(Y_{\ol{k}}, \Q_\ell)$ we mean $\ell$-adic \'etale cohomology. For a finite field $\F_q$, $\Frob_q$ denotes the geometric Frobenius in $G_{\F_q}$, which sends $x \mapsto x^{1/q}$. 
    \par
    We caution that some of our references (e.g. \cite{Vial2013} and \cite{Vial2017}) adopt a covariant convention for motives, meaning that they work in the opposite of our category $\mc{M}(k)$, and use the opposite sign convention for Tate twists. We will translate to our preferred conventions when using their results. See \cite[Section 14.1.2]{Kahn_Murre_Pedrini_2007} for a guide to the covariant convention.  

    \subsection*{Acknowledgments}
    I thank Jeff Achter, Niranjan Ramachandran, Shubhankar Sahai, Padma Srinivasan, Doug Ulmer, and Charles Vial for helpful conversations.

    \section{Preliminaries on algebraic cycles}
    We begin by collecting some known results on the Chow groups of certain varieties. Our focus is on cubic hypersurfaces (whose Chow groups are also surveyed in \cite[Section 7.7.4]{Huybrechts2023}). Throughout the paper we will freely use the following standard lemma. 
    \begin{lem}\label{lemChInject}
        For any finite-type scheme $X$ over $k$ and any algebraic field extension $L / k$, the pullback map $\CH_i(X) \to \CH_i(X_L)$ is injective. When $X$ is additionally smooth and separated, the pullback is also injective when $L / k$ is an arbitrary extension.
    \end{lem}
    \begin{proof}
        This follows from \cite[Tag 0FH8]{stacks-project} and \cite[Lemma 2.2]{Broe2025}.
    \end{proof}
    Let $\Omega$ be a universal domain containing $k$, i.e. $\Omega$ is an algebraically closed field of infinite transcendence degree over its prime field. 
    \begin{prop}[{\cite[Lemma 5.1]{FV2023}}]\label{propRatNumCH2}
        Let $X$ be a smooth projective variety over $k$, such that there exists a curve $C \subseteq X_\Omega$ whose points generate $\CH_0(X_\Omega)$. If $H^3(X_{\ol{k}}, \Q_\ell) = 0$, then rational and numerical equivalence agree on $\CH^2(X)$. 
    \end{prop}
    \begin{rem}\label{remCHdim0Cubic}
        In particular, the proposition applies when $X$ is a smooth cubic hypersurface of dimension $\ge 2$ and $\neq 3$: then $X_\Omega$ is unirational \cite[Theorem 1]{Kollar2002}, hence rationally chain connected \cite[Example 10.1.6]{Fulton1998}, so $\CH_0(X_\Omega)_\Z \cong \Z$.
    \end{rem}
    The Fano variety of lines encodes much of the geometry of a cubic hypersurface. We note some of its basic properties.
    \begin{prop}[{\cite[Theorems 1.10 and 1.16]{AK1977}}]
        For $X$ a smooth cubic hypersurface over $k$ of dimension $\ge 3$, the Fano variety of lines on $X$ is smooth, projective, and geometrically connected.
    \end{prop}
    \par
    For a smooth, projective, geometrically connected variety $X$ over $k$, there exists a natural \enquote{Albanese} map from the group $\CH_0(X)_{\Z, 0}$ of zero-cycles of degree zero to the $k$-points of the Albanese variety $\Alb(X)$. We denote by $T(X)$ the kernel of this map, and refer to it as the Albanese kernel of $X$.
    \begin{prop}[{\cite[Proposition 9]{KS1983}}]\label{propAlbKerFin}
        When $k$ is finite, $T(X)$ is finite.
    \end{prop}
    \begin{cor}\label{corCHDim1Cubic}
        If $k$ is an algebraic extension of a finite field, then for any smooth cubic hypersurface $X$ over $k$ of dimension $\ge 3$, we have $\CH_1(X) \cong \Q$.
    \end{cor}
    \begin{proof}
        It suffices to assume that $k = \ol{k}$. Then by \cite[Theorem 1.7]{Shen2019}, there exists a surjective map $\CH_0(F(X)) \to \CH_1(X)$, where $F(X)$ is the Fano variety of lines on $X$. By our assumption on $k$, the group of $k$-points of $\Alb(F(X))$ is torsion, so proposition \ref{propAlbKerFin} implies that $\CH_0(F(X)) \cong \Q$. As any nonzero effective class in $\CH_1(X)$ is nontrivial, we must have $\CH_1(X) \cong \Q$.
    \end{proof}
    We note in passing that \cite[Theorem 1.7]{Shen2019} has been generalized to cycles of higher dimension in \cite[Theorem 0.1]{Lyu2025}, though we will not use this.
    \par
    Work of Esnault--Levine--Viehweg gives some control over Chow groups of general projective schemes in a certain range.

    \begin{prop}[{\cite[Theorem 4.6]{ELV1997}}]\label{propGenChVanish}
        Let $Z$ be a closed subscheme of $\mathbb{P}^n_k$, defined by $r$ homogeneous equations of respective degrees $d_1 \ge...\ge d_r \ge 2$. Fix $j \ge 0$. If either $d_1 \ge 3$ or $r \ge j + 1$, then whenever 
        \begin{gather*}
            \sum\limits_{i=1}^r \binom{j + d_i}{j+1} \le n,
        \end{gather*}
        we have $\CH_{j'}(Z) \cong \Q$ for $0 \le j' \le j$. If $d_1 = 2$ and $1 \le r \le j$, then whenever
        \begin{gather*}
            r(j + 2) \le n + r - j - 1,
        \end{gather*}
        the same conclusion holds. 
    \end{prop}

    For example, in the case of a cubic hypersurface $Z$ of dimension $\ge 5$ over any field, we find that $\CH_1(Z) \cong \Q$.

    \begin{prop}\label{propTateForK3Cubic}
        Let $k$ be a finitely generated field and $X$ a smooth cubic fourfold over $k$. If $\character(k) \neq 2$, or $k$ is finite of characteristic two, then the Tate conjecture holds for $X$.
    \end{prop}
    \begin{proof}
        See \cite[Corollary 6]{Charles2013}, \cite[Theorem 5.14]{Madapusi2015}, and \cite[Theorem 4.9]{AKPW2025}.
    \end{proof}

    When $X$ is a smooth cubic hypersurface of dimension $d$, its middle Betti number $b_d(X)$ is computed in \cite[Corollary 1.1.12]{Huybrechts2023}: when $d = 2,$ $3,$ $4$, $5$, or $6$, $b_d(X)$ is respectively 7, 10, 23, 42, or 87. We will freely use these values throughout the paper without further comment. We also note the classical result that a smooth cubic surface over an algebraically closed field is isomorphic to a blowup of $\mathbb{P}^2$ at six points \cite[Proposition 4.2.4]{Huybrechts2023}.

    \section{Motives}
    In this section, we recall some needed background on motives, and prove a motivic relation between a smooth cubic threefold and its intermediate Jacobian (proposition \ref{propH3Cubic}). An analogous relation holds for a smooth cubic fivefold, and for certain other complete intersections (corollary \ref{corH5Cubic}). The proofs of the latter results rely on standard specialization techniques for Chow motives, which enable the generalization of certain statements about motives over $\mathbb{C}$ to more-or-less arbitrary base fields.
    \par
    The section is organized as follows. In subsections \ref{ssecFdMot}-\ref{ssecBlowup}, we review some known facts about motives, including aspects of finite-dimensional motives and Chow--K\"unneth decompositions. In subsection \ref{ssecCubic3Mot} we give an explicit description of the motive of a smooth cubic threefold, in terms of that of its intermediate Jacobian. In subsection \ref{ssecSpecialization}, we note that Fulton's specialization map on Chow groups \cite[Chapter 20.3]{Fulton1998} can be extended to a specialization functor on motives, and record some properties of this functor. In subsection \ref{ssecRepChow}, we use specialization to generalize results of Vial \cite{Vial2013} and Achter--Casalaina-Martin--Vial \cite{ACMV2020}, on representability of Chow groups and motives over characteristic zero fields, to arbitrary base fields. In subsection \ref{ssecMotAbTypeEx}, we apply this to explicit examples, constructing \enquote{intermediate Jacobians} (up to isogeny) of smooth cubic fivefolds, and of some other smooth complete intersections. Though it is not needed for our immediate applications in this paper, we also generalize to essentially arbitrary base fields other examples of varieties with finite-dimensional motive and motive of abelian type, originally due to Diaz \cite{Diaz2016}, Laterveer (\cite{Laterveer2016}, \cite{Laterveer2017}, \cite{Laterveer2018}, \cite{Laterveer2021}), Bolognesi--Laterveer \cite{BL2024}, and Fu--Moonen \cite{FM2023}. These examples include the Fano varieties of lines on smooth cubic threefolds and fivefolds, Gushel--Mukai threefolds and fivefolds, and a few explicit families of K3 surfaces and cubic fourfolds.
    
    \subsection{Finite-dimensional motives}\label{ssecFdMot} 
    Our later arguments will make ample use of the theory of finite-dimensional motives \cite[Definition 3.7]{Kimura2005}, in which the below result, known as the nilpotence theorem, plays a fundamental role.
    \begin{prop}[{\cite[Proposition 7.5]{Kimura2005}}]\label{propNilp}
        If $M$ is a finite-dimensional motive over $k$, and $f: M \to M$ is an endomorphism which induces the zero map modulo numerical equivalence, then $f$ is nilpotent.
    \end{prop}
    We will repeatedly refer to the fact, due to Vial, that if $M$ is a motive over $k$, and $L / k$ is a field extension, then $M_L$ is finite-dimensional iff $M$ is finite-dimensional \cite[Theorem 2]{Vial2017}.
    \par
    The following standard lemma is quoted from \cite[Section 3.3]{Vial2017}. It implies that two finite-dimensional motives which are isomorphic modulo numerical equivalence are in fact isomorphic.
    \begin{lem}\label{lemFdInverse}
        Let $f: M \to N$ be a map of motives over $k$, and $f_{\num}: M_{\num} \to N_{\num}$ the induced map modulo numerical equivalence. If $N$ is finite-dimensional and $f_{\num}$ has a right inverse, then $f$ has a right inverse. If $M$ is finite-dimensional and $f_{\num}$ has a left inverse, then $f$ has a left inverse.
    \end{lem}
    \begin{proof}
        Suppose that $N$ is finite-dimensional, and let $g_{\num}$ be a right inverse of $f_{\num}$. Let $g: N \to M$ be a lift of $g_{\num}$ modulo rational equivalence. Then by proposition \ref{propNilp}, there exists $n \ge 0$ such that $(\id_N - f \circ g)^n = 0$. By algebra we obtain an explicit $g': N \to M$ with $\id_N = f \circ g'$. The proof of the opposite assertion is identical. 
    \end{proof}

    In view of the previous lemma, it will be useful to have a cohomological criterion for isomorphism of numerical motives.
    \begin{lem}\label{lemNumMotIsoCrit}
        Let $M$, $N$ be motives over $k$, and let $f: M \to N$ and $g: N \to M$ be two maps which induce isomorphisms on $\ell$-adic cohomology. Then $f$ and $g$ induce isomorphisms modulo numerical equivalence.
    \end{lem}
    \begin{proof}
         Let $h = g \circ f$, so that $h$ induces an automorphism $a$ of $V = H^\bullet(M, \Q_\ell)$. Let $P(t) \in \Q_\ell[t]$ be the characteristic polynomial of $a$ acting on $V$. Then we have $P(0) = \det(a) \neq 0$, while $P(a) = 0$ by the Cayley--Hamilton theorem. Multiplying both sides of $P(a) = 0$ by $a^{-1}$ and solving for $a^{-1}$, we find that $a^{-1}$ is a $\Q_\ell$-linear combination of powers of $a$. In particular, $a^{-1}$ is the $\ell$-adic realization of an endomorphism of $M \otimes_\Q \Q_\ell$, considered as a motive with $\Q_\ell$-coefficients. Repeating this argument with $f \circ g$ in place of $g \circ f$, we see that $f \otimes_{\Q}\Q_\ell: M \otimes_\Q \Q_\ell \to N \otimes_\Q \Q_\ell$ induces an isomorphism modulo homological equivalence with respect to $\ell$-adic cohomology. Since numerical equivalence is the coarsest adequate equivalence relation, this implies that the map $f_{\num} \otimes_{\Q} \Q_\ell: M_{\num} \otimes_{\Q} \Q_\ell \to N_{\num} \otimes_{\Q} \Q_\ell$ induced by $f \otimes_{\Q} \Q_\ell$ modulo numerical equivalence is an isomorphism. Motives modulo numerical equivalence form an abelian category \cite{Jannsen1992}, so the kernel and cokernel of $f_{\num}: M_{\num} \to N_{\num}$ must vanish after tensoring with $\Q_\ell$. But this implies that the kernel and cokernel were already zero before tensoring, so $f_{\num}$ is an isomorphism.
    \end{proof}
    Combining lemmas \ref{lemFdInverse} and \ref{lemNumMotIsoCrit} yields the following corollary.
    \begin{cor}\label{corFdIsoCrit}
        With hypotheses and notation as in lemma \ref{lemNumMotIsoCrit}, if $M$ and $N$ are further assumed to be finite-dimensional, then $M \cong N$.
    \end{cor}
    \subsection{Artin motives and motives of abelian type}\label{ssecArtAb} 
    An Artin motive is a motive which is isomorphic to a direct summand of $\mf{h}(\Spec A)$, where $A$ is a finite \'etale $k$-algebra. The functor $M \mapsto \CH^0(M_{\ol{k}})$ is an equivalence from the category of Artin motives over $k$ to the category of continuous $G_k$-representations on finite-dimensional $\Q$-vector spaces \cite[Exemples 4.1.6.1]{Andre2004}. Here the vector spaces are equipped with the discrete topology.
    \par
    A motive of abelian type is an object of the thick and rigid tensor subcategory of $\mc{M}(k)$ generated by the motives of abelian varieties. For instance, the motive of a smooth, projective, geometrically connected curve is of abelian type \cite[Proposition 4.5]{Scholl1994}. When $m \ge 1$ and $\character(k) \nmid m$, the Fermat hypersurface $X \subset \mathbb{P}^{n+1}_k$ defined by the equation
    \begin{gather*}
        \sum\limits_{i=0}^{n+1} x_i^m = 0
    \end{gather*}
    has motive of abelian type \cite[Theorem 1]{KS79}. Artin motives, and motives of abelian type, are finite-dimensional \cite[Example 9.1]{Kimura2005}. 
    \par
    The property of being an Artin motive descends along field extensions.
    \begin{lem}\label{lemArtinBaseChange}
        If $M \in \mc{M}(k)$, and there exists a field extension $L / k$ such that $M_L$ is an Artin motive, then $M$ is an Artin motive.
    \end{lem}
    \begin{proof}
        We may assume that $L$ is algebraically closed, so that $M_L \cong \mathbf{1}_L^{\oplus r}$ for some $r \ge 0$. Then $M$ is finite-dimensional by \cite[Theorem 2]{Vial2017}. Combining lemma \ref{lemFdInverse} with \cite[Proposition 5.5]{Kahn2018} shows that $M_{\ol{k}} \cong \mathbf{1}_{\ol{k}}^{\oplus r}$. The finite-dimensional $G_k$-representation $\CH^0(M_{\ol{k}})$ then has a corresponding Artin motive $N \in \mc{M}(k)$. Since $\CH^0(M_{\ol{k}}) = \Hom_{\mc{M}(\ol{k})}(\mathbf{1}_{\ol{k}}, M_{\ol{k}})$, there is a $G_k$-equivariant isomorphism $f_{\ol{k}}: N_{\ol{k}} \xrightarrow{\sim} M_{\ol{k}}$. By \cite[Lemma 2.2]{Broe2025}, this map descends to an isomorphism $f: N \xrightarrow{\sim} M$ defined over $k$.
    \end{proof}
    \subsection{Chow--K\"unneth decompositions}\label{ssecCK}
    Let $H$ be a classical Weil cohomology theory on $\mc{M}(k)$ \cite[\hphantom{}3.4]{Andre2004}; for instance, $H$ can be \'etale cohomology with coefficients in $\Q_\ell$.
    \begin{defn}
        A \textbf{Chow--K\"unneth decomposition} of a motive $M$  (with respect to $H$) consists of an isomorphism of motives
        \begin{gather*}
            M \cong \bigoplus\limits_{i \in \Z} \mf{h}^i(M),
        \end{gather*}
       such that for all $i, j \in \mathbb{Z}$, we have $H^j(\mf{h}^i(M)) = 0$ whenever $i \neq j$. 
       \par
       If $X$ is a smooth projective scheme over $k$, then we denote the components of a Chow--K\"unneth decomposition of $\mf{h}(X)$ by $\mf{h}^i(X)$. Suppose that $X$ has pure dimension $d$. A Chow--K\"unneth decomposition of $\mf{h}(X)$ is then said to be \textbf{self-dual} if for all $i$, the canonical isomorphism $\mf{h}(X)^\vee \cong \mf{h}(X)(d)$ induces an isomorphism $\mf{h}^i(X)^\vee \cong \mf{h}^{2d-i}(X)(d)$. Equivalently, if $\pi_i: \mf{h}(X) \to \mf{h}^i(X)$ denotes the projector onto the $i$-th Chow--K\"unneth component, the decomposition is self-dual iff for all $i$, $\pi_i$ is the transpose of $\pi_{2d-i}$.
    \end{defn}
    By \cite[Th\'eor\`eme 4.2.5.2 and Remarque 5.1.1.2]{Andre2004}, the above definition does not depend on the choice of $H$, so we will simply speak of Chow--K\"unneth decompositions, with no reference to $H$. Self-dual Chow--K\"unneth decompositions are known to exist for motives of abelian varieties \cite{DM1991}, curves, surfaces \cite{Murre1990}, complete intersections in $\mathbb{P}^n$ \cite{KS2016}, and several other classes of varieties. All the concrete examples of Chow--K\"unneth decompositions considered in this paper will have the property that $\CH^i(\mf{h}^1(X)) = 0$ when $i \neq 1$, and $\CH^1(\mf{h}^1(X)) = \CH^1(X)_0$ (see \cite[Theorem 4.4]{Scholl1994}).
    \par
    For later use, we recall the explicit Chow--K\"unneth decompositions of smooth complete intersections.
    \begin{prop}\label{propFd}
        A smooth complete intersection $X \subset \mathbb{P}^r_k$ of dimension $d$ has a self-dual Chow--K\"unneth decomposition of the form
        \begin{gather*}
            \mf{h}(X) \cong \left[\bigoplus\limits_{\substack{0 \le i \le d \\ i \neq d / 2}} \mathbf{1}(-i)\right] \oplus \mf{h}^d(X).
        \end{gather*}
    \end{prop}
    \begin{proof}
        The existence and form of the Chow--K\"unneth decomposition is proved in \cite[Section 3.5]{KS2016}. 
    \end{proof}

    Up to isomorphism, Chow--K\"unneth components of finite-dimensional motives are independent of the choice of decomposition.

    \begin{lem}\label{lemFdUniqueCK}
        If $M$ is a finite-dimensional motive, then for any two Chow--K\"unneth decompositions $\mf{h}^\bullet(M), \mf{h}^\bullet(M)'$ of $M$, we have $\mf{h}^i(M) \cong \mf{h}^i(M)'$ for all $i$.  
    \end{lem}
    \begin{proof}
        Using the Chow--K\"unneth projectors to construct a map $\mf{h}^i(M) \to \mf{h}^i(M)'$, this follows immediately from corollary \ref{corFdIsoCrit}.
    \end{proof}
    We note that all maps between $\mf{h}^1$s of abelian varieties arise from maps in the isogeny category of abelian varieties.
    \begin{lem}\label{lemAbH1}
        For $A, B$ abelian varieties over $k$, there is a canonical isomorphism
        \begin{gather*}
            \Hom_{\mc{M}(k)}(\mf{h}^1(A), \mf{h}^1(B)) \cong \Hom_k(A^\vee, B^\vee) \otimes_\Z \Q,
        \end{gather*}
        where $A^\vee$ and $B^\vee$ denote the dual abelian varieties of $A$ and $B$, and $\Hom_k(A^\vee, B^\vee)$ denotes homomorphisms of abelian varieties. In particular, if a map $f: \mf{h}^1(A) \to \mf{h}^1(B)$ induces an isomorphism on $\ell$-adic cohomology, then some integer multiple of $f$ is induced by an isogeny from $B$ to $A$, so that $f$ is an isomorphism of motives.
    \end{lem}
    \begin{proof}
        This follows from \cite[Proposition 4.5]{Scholl1994}.
    \end{proof}

    \subsection{The blowup formula}\label{ssecBlowup}
    We will later need the below blowup formula for motives.
    \begin{prop}[{\cite[Corollary 3.5]{Hanamura2011}}]\label{propBlowup}
        If $X$ is a smooth projective variety over $k$, $Z \subseteq X$ is a smooth closed subvariety of codimension $c$, and $\widetilde{X}$ is the blowup of $X$ along $Z$, then there is an isomorphism
        \begin{gather*}
            \mf{h}(\widetilde{X})_\Z \cong \mf{h}(X)_\Z \oplus \bigoplus\limits_{j=1}^{c-1} \mf{h}(Z)(-j)_\Z
        \end{gather*}
        in the category of motives with $\Z$-coefficients over $k$. This induces isomorphisms
        \begin{gather*}
            \CH^i(\widetilde{X})_\Z \cong \CH^i(X)_\Z \oplus \bigoplus\limits_{j=1}^{c-1} \CH^{i - j}(Z)_\Z
        \end{gather*}
        for all $i$.
    \end{prop}
    
    \subsection{Cubic threefolds}\label{ssecCubic3Mot}
    In proposition \ref{propH3Cubic} below, we explicitly describe the motives of smooth cubic threefolds.
    \begin{lem}\label{lemCubicFd}
        When $X$ is a smooth cubic hypersurface of dimension $\le 3$, the motive of $X$ is finite-dimensional.
    \end{lem}
    \begin{proof}
        Curves have finite-dimensional motive, so we may assume $\dim X \ge 2$. When $X$ has a rational point, $X$ is unirational by \cite[Theorem 1]{Kollar2002}, and \cite[Theorem 3.12 and Proposition 3.13]{Vial2017} then imply that $\mf{h}(X)$ is finite-dimensional. Theorem 2 of loc. cit. yields the same conclusion when $X$ does not have a rational point.
    \end{proof}
    The following proposition is likely known to experts, but we did not find a proof over an arbitrary field in the literature. It was proved by Shermenev over an algebraically closed field \cite{Shermenev1970}, via a different method. See \cite[Remark 5.3.17]{Huybrechts2023} for some relevant background and references.
    \begin{prop}\label{propH3Cubic}
        Let $X$ be a smooth cubic threefold over $k$, equipped with a Chow--K\"unneth decomposition. There is then an isomorphism
        \begin{gather*}
            \mf{h}^3(X) \cong \mf{h}^1(A)(-1),
        \end{gather*}
        where $A$ is the Albanese variety of the Fano surface $F$ of lines on $X$.
    \end{prop}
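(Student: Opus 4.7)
The plan is to construct a candidate morphism of motives from the universal line on $X$ and then promote it to an isomorphism via finite-dimensionality.

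Let $F = F(X)$ denote the Fano surface of lines on $X$, a smooth projective surface of dimension $2$ with Albanese variety $A$. Let $P \subset F \times_k X$ be the universal line, which is a $\mathbb{P}^1$-bundle over $F$; its fundamental class lies in $\mathrm{CH}^2(F \times_k X)$, so its transpose $[P]^t$ lies in $\mathrm{CH}^2(X \times_k F)$. Murre's theorem furnishes a Chow--K\"unneth decomposition of $F$, and the Albanese morphism induces a canonical isomorphism $\mathfrak{h}^1(F) \cong \mathfrak{h}^1(A)$. Composing $[P]^t$ with the given projector $\pi_3$ on $X$, Murre's projector $\pi_1^F$ on $F$, and this identification, one obtains a morphism of motives
\begin{gather*}
    \psi: \mathfrak{h}^3(X) \longrightarrow \mathfrak{h}^1(A)(-1).
\end{gather*}

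Both $\mathfrak{h}^3(X)$ and $\mathfrak{h}^1(A)(-1)$ are finite-dimensional: the former is a summand of $\mathfrak{h}(X)$, which is finite-dimensional by Proposition \ref{propFd}, and the latter is a summand of the motive of an abelian variety and hence of abelian type. A standard consequence of Proposition \ref{propNilp} is that a morphism between finite-dimensional motives is an isomorphism in $\mathcal{M}(k)$ whenever it realizes to an isomorphism on classical cohomology: one lifts an inverse from the semisimple category of motives modulo numerical equivalence, and Proposition \ref{propNilp} then implies that each of the two compositions with $\psi$ differs from the identity by a nilpotent endomorphism. It thus suffices to prove that $\psi$ realizes to an isomorphism $H^3(X_{\ol{k}}, \Q_\ell) \cong H^1(A_{\ol{k}}, \Q_\ell)(-1)$ of $G_k$-representations.

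This last statement is the $\ell$-adic avatar of the classical Clemens--Griffiths identification of the intermediate Jacobian of a smooth cubic threefold with the Albanese variety of its Fano surface of lines, and in the generality needed here it is provided by \cite[Proposition 5.54]{Ciurca2024}. The main obstacle is checking that the realization of $\psi$ coincides, up to a nonzero scalar, with the map constructed in that reference; both arise from the Abel--Jacobi construction attached to the incidence correspondence $P$, so this matching should be essentially formal, and once it is in place the argument goes through for any choice of Chow--K\"unneth decomposition of $X$.
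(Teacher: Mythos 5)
Your approach is genuinely different from the paper's. The paper does not construct any morphism at all: it invokes \cite[Lemma 5.1 and Proposition 6.4]{Kahn2021}, which for any geometrically integral threefold with a Chow--K\"unneth decomposition supply a canonical splitting $\mf{h}^3(X)\cong t^3(X)\oplus\mf{h}^1(J)(-1)$ with $J$ isogenous to $\Ab^2(X)$ after perfection; it then identifies $\Ab^2(X)$ with the Albanese $A$ of the Fano surface via \cite[Proposition 5.54]{Ciurca2024} (with a short descent lemma handling imperfect $k$), and kills the complementary summand $C$ by observing that $H^*(C)=0$ (from the cohomological dimension count $\dim H^3(X)=\dim H^1(A)$ of loc.\ cit.\ Proposition 5.13), so $\id_C$ is homologically, hence numerically trivial, hence nilpotent by Proposition \ref{propNilp}, hence zero. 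This has the advantage that the only thing that needs to be matched with Ciurca's results is a \emph{dimension}, not a specific map, and the only finite-dimensionality input is the easy direction ``$H^*(C)=0\Rightarrow C=0$.''

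By contrast, you construct an explicit correspondence $\psi\colon\mf{h}^3(X)\to\mf{h}^1(A)(-1)$ and want to promote it. Two soft spots deserve attention. First, the lemma you invoke --- ``a morphism of finite-dimensional motives that realizes to an isomorphism is an isomorphism'' --- is true, but your sketch of it (lift an inverse from $\mc{M}_{\num}$ and apply nilpotence) hides the real content: to know that $\overline{\psi}$ \emph{is} invertible in $\mc{M}_{\num}$, you cannot simply appeal to semisimplicity and the cohomological isomorphism, since numerical and homological equivalence are not known to agree a priori on $\mf{h}^3(X)$; one needs the determinant/Cramer's-rule argument (pass to $\det\psi$, a morphism of invertible objects, hence an isomorphism, then use the adjugate morphism on the finite-dimensional pieces to produce a two-sided inverse). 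Second, as you yourself flag, the argument only closes once you verify that the realization of your $\psi$ agrees (up to nonzero scalar) with the $\ell$-adic Clemens--Griffiths/Ciurca isomorphism $H^3(X_{\ol k},\Q_\ell)\cong H^1(A_{\ol k},\Q_\ell)(-1)$; this is plausible since both come from the incidence correspondence, but it is not formal and must be checked against \cite[Proposition 5.54]{Ciurca2024}, including the normalization of the Albanese identification $\mf{h}^1(F)\cong\mf{h}^1(A)$. The paper's route deliberately avoids both of these burdens.
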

    \begin{proof}
        Fix a self-dual Chow--K\"unneth decomposition 
        \begin{gather*}
            \mf{h}(F) \cong \mathbf{1} \oplus \mf{h}^1(F) \oplus \mf{h}^2(F) \oplus \mf{h}^1(F)(-1) \oplus \mathbf{1}(-2)
        \end{gather*}
        as in \cite[Theorem 4.4]{Scholl1994}, so that $\mf{h}^1(F) \cong \mf{h}^1(A)$. Let $I$ be the incidence correspondence 
        \begin{gather*}
            I = \{(L, x): x \in L\} \subset F \times_k X, 
        \end{gather*}
        and let 
        \begin{gather*}
            p = [I] \in \CH^2(F \times_k X) = \Hom_{\mc{M}(k)}(\mf{h}(F), \mf{h}(X)).
        \end{gather*}
        Then
        \begin{align*}
            p^\vee &\in \Hom_{\mc{M}(k)}(\mf{h}(X)^\vee, \mf{h}(F)^\vee) \\
            &= \Hom_{\mc{M}(k)}(\mf{h}(X)(3), \mf{h}(F)(2)) \\
            &= \Hom_{\mc{M}(k)}(\mf{h}(X), \mf{h}(F)(-1)). 
        \end{align*}
        Let $\pi_i: \mf{h}(X) \to \mf{h}^i(X)$ (resp. $\psi_i: \mf{h}(F) \to \mf{h}^i(F)$) denote the projectors onto the Chow--K\"unneth components. By \cite[page 256]{CP2015}, $p^\vee$ induces an isomorphism $H^3(X_{\ol{k}}, \Z_\ell) \cong H^1(F_{\ol{k}}, \Z_\ell(-1))$, so that 
        \begin{align*}
            f &= \pi_3 \circ \left(p|_{\mf{h}^3(F)}\right): \mf{h}^1(F)(-1) \to \mf{h}^3(X),\\
            g &= \psi_1(-1) \circ \left(p^\vee|_{\mf{h}^3(X)}\right): \mf{h}^3(X) \to \mf{h}^1(F)(-1)
        \end{align*}
        induce isomorphisms on $\ell$-adic cohomology. Then corollary \ref{corFdIsoCrit} yields an isomorphism $\mf{h}^3(X) \cong \mf{h}^1(F)(-1)$. 
    \end{proof}

    \subsection{Specialization}\label{ssecSpecialization}
    An analogue of proposition \ref{propH3Cubic} holds for a smooth cubic fivefold, and certain other complete intersections. Its proof will rely on a specialization argument, which reduces the statement over an arbitrary field to the characteristic zero case.  For this purpose, we now introduce a category $\mc{M}^{\model}(K)$ of \enquote{motives with good models} over $K$, where $K$ is a discretely valued field. 
    \par
    Let $R$ be the valuation ring of $K$, and $\kappa$ the residue field of $R$. The objects of $\mc{M}^{\model}(K)$ are of the form $(\mc{X}, \pi, n)$, where $\mc{X}$ is a smooth projective scheme over $R$, $\pi$ is an idempotent self-correspondence of the generic fiber $\mc{X}_K$, and $n$ is an integer. In other words, $(\mc{X}_K, \pi, n)$ is a Chow motive over $K$ in the usual sense. Morphisms in $\mc{M}^{\model}(K)$ are simply morphisms of the underlying Chow motives over $K$. The category $\mc{M}^{\model}(K)$ naturally inherits the structure of a rigid, pseudo-abelian tensor category from $\mc{M}(K)$, since the usual constructions of direct sums and summands, tensor products, and duals in $\mc{M}(K)$ all extend in an evident way to account for the extra data of a smooth projective scheme over $R$. An object of $\mc{M}^{\model}(K)$ is finite-dimensional in the sense of \cite[D\'efinitions 3.3]{Andre2005} iff the underlying motive over $K$ is finite-dimensional. 
    \par
    Note that $\mc{M}^{\model}(K)$ is slightly different from the category of relative Chow motives over $R$, since the morphisms in the former are correspondences defined over $K$, rather than over $R$. See \cite{DM1991}, \cite{CH2000}, \cite{CH2007}, \cite{Moonen2014}, and \cite{Guletskii2018} for some previous works which consider relative Chow motives, and also \cite{CD2019} for the general formalism of triangulated categories of mixed motives over a base. 
    \par
    For a smooth projective scheme $\mc{X}$ over $R$, Fulton in \cite[Chapter 20.3]{Fulton1998} constructs a specialization homomorphism $\sigma: \CH^\bullet(\mc{X}_K)_\Z \to \CH^\bullet(\mc{X}_\kappa)_\Z$ satisfying the following compatibilities:
    \begin{enumerate}[(i)]
        \item $\sigma$ is a map of graded rings;
        \item $\sigma$ is compatible with the $\ell$-adic cycle class map\footnote{Fulton only states the compatibility of specialization with the $\ell$-adic cycle class map under the assumption that $R$ is complete, but the general case follows by base change to the completion of $R$. For a closed subvariety $Z \subseteq \mc{X}_K$, $\sigma([Z])$ is simply the class of the special fiber of the closure of $Z$ in $\mc{X}$, so $\sigma$ is evidently compatible with this base change. In any case, nothing is lost in the following subsections by always requiring $R$ to be complete.}, via the smooth proper base change isomorphism $H^{2i}(\mc{X}_{\ol{K}}, \Z_\ell(i)) \cong H^{2i}(\mc{X}_{\ol{\kappa}}, \Z_\ell(i))$;
        \item for a map $f: \mc{Y} \to \mc{X}$ of smooth projective schemes over $R$, $\sigma$ commutes with pushforward of cycles by $f$, i.e. for a cycle $c \in \CH^i(\mc{Y}_K)$, we have $\sigma(f_{K, *}(c)) = f_{\kappa, *}(\sigma(c))$;
        \item when $f$ is flat, $\sigma$ commutes with pullback of cycles by $f$. 
    \end{enumerate}
    The specialization map is thus compatible with the composition of correspondences of smooth projective varieties, and induces a motivic specialization functor $\sigma_{\mc{M}}: \mc{M}^{\model}(K) \to \mc{M}(\kappa)$, sending $(\mc{X}, \pi, n) \mapsto (\mc{X}_\kappa, \sigma(\pi), n)$. This is a functor of $\Q$-tensor categories (in the sense of \cite{Andre2005}), and hence preserves finite-dimensional objects. By smooth proper base change, the specialization functor also preserves Chow--K\"unneth decompositions. Note that motivic specialization techniques based on \cite[Chapters 6 and 20.3]{Fulton1998} are standard; see e.g. \cite[Proof of Theorem 1]{Vial2017}. 
    \par
    When $\mc{X}$ is an abelian scheme over $R$, Deninger--Murre's canonical Chow--K\"unneth decompositions of $\mf{h}(\mc{X}_K)$ and $\mf{h}(\mc{X}_\kappa)$ are compatible with specialization, by the uniqueness statement of \cite[Theorem 5.2]{Scholl1994}. Likewise, for a relative smooth complete intersection $\mc{X}$ over $R$, the Chow--K\"unneth decompositions of $\mf{h}(\mc{X}_K)$ and $\mf{h}(\mc{X}_\kappa)$ discussed in proposition \ref{propFd} can be chosen compatibly with specialization, via the construction of \cite[Section 3.5]{KS2016}.
    \par
    Using specialization, we can prove that the base change functor on motives is conservative, i.e. detects isomorphisms.
     \begin{lem}\label{lemMotBaseChgConser}
        If $f: M \to N$ is a map of motives over $k$, $L / k$ is a field extension, and the base change $f_L: M_L \to N_L$ of $f$ to $L$ is an isomorphism, then $f$ is an isomorphism.
    \end{lem}
    \begin{proof}
        When $L / k$ is algebraic, this follows from a standard Galois descent argument, cf. \cite[Lemma 2.2]{Broe2025}. We may thus assume that $k$ is algebraically closed. By spreading out, it suffices to assume that $L$ is finitely generated over $k$. By induction on the transcendence degree, we can assume that $\trdeg(L / k) = 1$. Finally, since we know the claim for algebraic extensions, we can assume that $L = k(t)$. Then $L$ is the function field of $\mathbb{A}^1_k$, and we can specialize $(f_L)^{-1}$ to a $k$-point of $\mathbb{A}^1_k$, obtaining a map $g: N \to M$ defined over $k$. Since $\sigma_{\mc{M}}$ is a functor, $g$ is the desired inverse of $f$.
    \end{proof}
    \subsection{Representability of Chow groups and motives}\label{ssecRepChow}
    Subject to some conditions, our next theorem \ref{thmCKSpecialize} relates the motive of a smooth projective variety over an arbitrary field to the motives of certain abelian varieties. In characteristic zero, the theorem follows from combining results of \cite{Vial2013} and \cite{ACMV2020}; we then deduce the positive characteristic case via a specialization argument. 
    \par
    In order to state the theorem at a natural level of generality, we must make a few definitions. For our main applications, only a more concrete corollary of the theorem (corollary \ref{corH5Cubic}), describing the motives of some explicit complete intersections, will ultimately be relevant. The following two definitions are standard and will mostly be treated as a black box.
    \begin{defn}\label{defnRep}
        For a smooth projective variety $X$ over a universal domain $\Omega$, let $A^i(X) \subseteq \CH^i(X)$ denote the subgroup of cycles which are trivial modulo algebraic equivalence. We say that $A^i(X)$ is \textbf{representable} if there exists a smooth projective curve $C$ over $\Omega$, and a correspondence $\Gamma \in \CH^i(X \times_\Omega C)$, such that $\Gamma_* A^1(C) = A^i(X)$.
    \end{defn}
    \begin{defn}
        For a smooth projective variety $X$ over $k$, and $i \in \Z$, the \textbf{geometric coniveau filtration} $N^\bullet H^i(X_{\ol{k}}, \Q_\ell)$ on the $\ell$-adic cohomology of $X$ is defined via 
        \begin{gather*}
            N^j H^i(X_{\ol{k}}, \Q_\ell) = \sum\limits_{\substack{Z \subseteq X \textrm{ closed} \\
            \textrm{of codim } \ge j}} \ker(H^i(X_{\ol{k}}, \Q_\ell) \to H^i((X-Z)_{\ol{k}}, \Q_\ell)).
        \end{gather*}
    \end{defn}
    We note that the geometric coniveau filtration is stable under base change along field extensions. 
    \begin{lem}\label{lemConiveauBaseChange}
        For a smooth projective variety $X$ over $k$, a field extension $L / k$, a compatible extension of algebraic closures $\ol{L} / \ol{k}$, and $i, j \in \mathbb{Z}$, pullback induces an isomorphism
        \begin{gather*}
            N^j H^i(X_{\ol{k}}, \Q_\ell) \cong N^j H^i(X_{\ol{L}}, \Q_\ell)
        \end{gather*}
    \end{lem}
    \begin{proof}[Sketch of proof]
        This follows from a standard spreading-out and specialization argument (see the start of \cite[Section 1.2]{ACMV2017}).
    \end{proof}
    The next (somewhat ad hoc) definition organizes the inputs to later specialization arguments. 
    \begin{defn}
        For a smooth projective variety $X$ over $k$, a \textbf{lifting datum} for $X$ is a tuple $(X, R, L, \iota, \mc{X}, \mc{X}_L)$, where 
        \begin{enumerate}[(i)]
            \item $R$ is a discrete valuation ring with residue field $k$, and fraction field $K$ of characteristic zero;
            \item $L$ is a subfield of $K$ equipped with an embedding $\iota: L \to \mathbb{C}$;
            \item $\mc{X}$ is a smooth projective scheme over $R$ with special fiber isomorphic to $X$;
            \item $\mc{X}_L$ is a smooth projective scheme over $L$ whose base change to $K$ is isomorphic to $\mc{X}_K$.
        \end{enumerate}
        We denote by $\mc{X}_{\mathbb{C}}$ the base change of $\mc{X}_L$ along $\iota$.
        \par
        If $k$ has characteristic zero, $R = k[[t]]$, $L = k$, $\mc{X} = X \times_k R$, and $\mc{X}_L = X$, then we say that the lifting datum is \textbf{constant}. In this case, a lifting datum is really just the datum of an embedding $k \hookrightarrow \mathbb{C}$.
    \end{defn}
    We can now state the theorem, which will imply that certain smooth complete intersections over any field have motive of abelian type. See also the related \cite[Theorem 3.4]{Vial2010}, which applies over an algebraically closed base field.
    \begin{thm}\label{thmCKSpecialize}
        Let $X$ be a smooth, projective, geometrically connected variety over $k$ of dimension $d$, and let $(X, R, L, \iota, \mc{X}, \mc{X}_L)$ be a lifting datum for $X$. Then 
        \begin{enumerate}[(i)]
            \item for all $0 \le n \le d-1$, there exists an abelian variety $J^{2n + 1}_a$ over $k$, and a map of motives $\gamma^{2n+1}: \mf{h}^1(J^{2n+1}_a)(-n) \to \mf{h}(X)$, with the following properties:
        \begin{enumerate}[(a)]
            \item for all primes $\ell$ invertible in $k$, the map $\gamma^{2n+1}_\ell$ induced by $\gamma^{2n+1}$ on $\ell$-adic cohomology is a split inclusion of $G_k$-representations;
            \item after identifying $H^\bullet(X_{\ol{k}}, \Q_\ell) \cong H^\bullet(\mc{X}_{\ol{K}}, \Q_\ell)$ via smooth proper base change, the image of $\gamma^{2n+1}_\ell$ is the $n$-th step of the geometric coniveau filtration on $H^{2n+1}(\mc{X}_{\ol{K}}, \Q_\ell)$.
        \end{enumerate}
            \item If the lifting datum is constant, then there exists a distinguished choice of an abelian variety $J^{2n+1}_a$ satisfying (i).
            \item If $\mc{X}_L$ has a Chow--K\"unneth decomposition, and for all $i$, $A^i(\mc{X}_{\mathbb{C}})$ is representable, then $X$ has a Chow--K\"unneth decomposition of the form
        \begin{gather}\label{eqRepCKDecomp}
            \mf{h}(X) \cong \left[\bigoplus\limits_{0 \le i \le d} M^{2i}(-i) \right] \oplus \left[\bigoplus\limits_{0 \le n \le d-1} \mf{h}^1(J^{2n+1}_a)(-n) \right],
        \end{gather}
        where $J_a^{2n+1}$ is as in (i), and the $M^{2i}$ are Artin motives. If the lifting datum is constant, then here we may take $J_a^{2n+1}$ as in (ii).
        \end{enumerate}
    \end{thm}
    We remark that if $\mf{h}(X)$ has a Chow--K\"unneth decomposition of the form \eqref{eqRepCKDecomp}, then lemma \ref{lemFdUniqueCK} implies that any other Chow--K\"unneth decomposition of $\mf{h}(X)$ is of the same form (up to isomorphisms of the summands).
    \begin{proof}[Proof of theorem \ref{thmCKSpecialize}]
        If $L = k = \mathbb{C}$, $\iota = \id$, and the lifting datum is constant, then we may take $J^{2n+1}_a$ to be the $n$-th algebraic intermediate Jacobian of $X$. The claims are then implied by \cite[Theorem A]{ACMV2020} and \cite[Theorem 4]{Vial2013}.
        \par
        Suppose now that $k$ has characteristic zero, and that the lifting datum is constant, so $L = k$. Then \cite[Theorem A]{ACMV2020} and lemma \ref{lemConiveauBaseChange} imply claims (i) and (ii). For (iii), fix a Chow--K\"unneth decomposition for $X$, and assume that for all $i$, $A^i(X_\mathbb{C})$ is representable. Then $\mf{h}(X)$ is finite-dimensional, since this can be checked after base change to $\mathbb{C}$, using the previous paragraph and \cite[Theorem 2]{Vial2017}. Theorem A of \cite{ACMV2020} provides a distinguished model $J_{a}^{2n+1}$ over $k$ of the $n$-th algebraic intermediate Jacobian of $X_{\mathbb{C}}$, together with a map $\gamma^{2n+1}: \mf{h}^1(J^{2n+1}_a)(-n) \to \mf{h}(X)$ inducing a split injection on $\ell$-adic cohomology. We claim that the
        composite
        \begin{gather*}
            g: \mf{h}^1(J^{2n+1}_a)(-n) \xrightarrow{\gamma^{2n+1}} \mf{h}(X) \twoheadrightarrow \mf{h}^{2n+1}(X)
        \end{gather*}
        is an isomorphism. To see this, note that after base change to $\mathbb{C}$, we have $\mf{h}^1(J_a^{2n+1})(-n)_{\mathbb{C}} \cong \mf{h}^{2n+1}(X)_{\mathbb{C}}$, by the previous paragraph and lemma \ref{lemFdUniqueCK}. Then the base change $g_{\mathbb{C}}: \mf{h}^1(J_a^{2n+1})(-n)_{\mathbb{C}} \to \mf{h}^{2n+1}(X)_{\mathbb{C}}$ of $g$ to $\mathbb{C}$ is an isomorphism, since lemma \ref{lemAbH1} allows us to check this after passing to $\ell$-adic cohomology. Now lemma \ref{lemMotBaseChgConser} implies that $g$ is an isomorphism. It remains to examine the even-degree Chow--K\"unneth summands of $X$. By the previous paragraph and lemma \ref{lemFdUniqueCK}, $\mf{h}^{2i}(X)(i)_{\mathbb{C}}$ is isomorphic to a direct sum of copies of the unit motive. We conclude from lemma \ref{lemArtinBaseChange} that $\mf{h}^{2i}(X)(i)$ is isomorphic to an Artin motive, which establishes (iii) in this case.
        \par
        Finally, suppose that $k$ is arbitrary and the lifting datum is not constant. By applying the previous case of the proof to the constant lifting datum for $\mc{X}_L$ associated with $\iota$, and base changing from $L$ to $K$, we obtain an abelian variety $J^{2n+1}_{a, K}$ over $K$ and a correspondence $\gamma^{2n+1}_{K}: \mf{h}^1(J^{2n+1}_{a, K})(-n) \to \mf{h}(\mc{X}_K)$ inducing a split injection on $\ell$-adic cohomology. As $\mc{X}_K$ has good reduction, the N\'eron--Ogg--Shafarevich criterion \cite[Theorem 7.4.5]{BLR1990} and smooth proper base change imply that $J^{2n+1}_{a, K}$ has good reduction to an abelian variety $J^{2n+1}_a$ over $k$. Similarly, if the hypotheses of (iii) are satisfied, then we obtain Artin motives $M^{2i}_K$ over $K$ and a Chow--K\"unneth decomposition 
        \begin{gather*}
            \mf{h}(\mc{X}_K) \cong \left[\bigoplus\limits_{0 \le i \le d} M^{2i}_K(-i) \right] \oplus \left[\bigoplus\limits_{0 \le n \le d-1} \mf{h}^1(J^{2n+1}_{a, K})(-n) \right].
        \end{gather*}
        Smooth proper base change implies that $H^\bullet(M^{2i}_K, \Q_\ell)$ is an unramified $G_K$-representation, which we can thus consider as a $G_k$-representation. By the correspondence between Artin motives and $G_K$-representations, it follows that $M^{2i}_K$ is a direct summand of $\mf{h}(\Spec A)$, where $A = \mc{A} \otimes_R K$ for some finite \'etale $R$-algebra $\mc{A}$. Writing $M^{2i}_K \cong (\Spec A, \pi, 0)$, we find that $M^{2i}_K$ is the underlying Chow motive of $(\Spec \mc{A}, \pi, 0) \in \mc{M}^{\model}(K)$. Applying the motivic specialization functor now reduces claims (i) and (iii) to the previously treated cases.
    \end{proof}
    \begin{rem}
        Via related specialization arguments, one can obtain corollaries of the other main theorems of \cite{Vial2013} over positive characteristic fields.
    \end{rem}
    \subsection{Some varieties with motive of abelian type}\label{ssecMotAbTypeEx}
    It follows from theorem \ref{thmCKSpecialize} that several explicit classes of varieties have motive of abelian type. We will use the following flatness criterion to lift these varieties to characteristic zero.
    \begin{lem}\label{lemFlatnessCrit}
        Let $R$ be a one-dimensional, local, Noetherian integral domain, with maximal ideal $\mf{m}$ and residue field $\kappa$. Let $S = \Spec R$, and let $f: \mc{Y} \to S$ be a finite-type, flat morphism, with special fiber $Y$. Let $\mc{Z}_1,...,\mc{Z}_n$ be closed subschemes of $\mc{Y}$, defined by locally principal ideal sheaves, and let $\mc{Z}$ be their scheme-theoretic intersection, with special fiber $Z$. Assume that $Y$ is Cohen--Macaulay, and that for all points $z \in Z$, the codimension of $Z$ in $Y$ at $z$ is equal to $n$. Then $\mc{Z}$ is flat over $S$. If we further assume that $f$ is proper, and $Z$ is smooth over $\kappa$, then $\mc{Z}$ is smooth over $S$.
    \end{lem}
    \begin{proof}
        First, we will show that $\mc{Z}$ is flat over $S$. Let $z \in Z$. Since the generic fiber of $\mc{Z}$ is automatically flat over $S$, it will suffice to show that $\mc{Z} \to S$ is flat at $z$. Let $B = \mc{O}_{\mc{Y}, z}$ be the local ring of $\mc{Y}$ at $z$, and let $f_i \in B$ be a local generator for the ideal sheaf on $\mc{Y}$ defining $\mc{Z}_i$. Let $C = B / \mf{m} B$, let $\ol{f}_i \in C$ be the reduction modulo $\mf{m}$ of $f_i$, and let $I = \langle \ol{f}_1,...,\ol{f}_n\rangle \subseteq C$. We assumed that the codimension of $Z$ in $Y$ at $z$ was equal to $n$, and that $C$ was Cohen--Macaulay. From \cite[Tag 00N6]{stacks-project} we deduce that $\ol{f}_1,...,\ol{f}_n$ is a regular sequence in $C$. Then \cite[Tag 0470]{stacks-project} implies that $\mc{Z} \to S$ is flat at $z$, as desired.
        \par
        Suppose now that $\mc{Y}$ is proper over $S$, and $Z$ is smooth over $\kappa$. Then for all $z \in Z$, the smooth locus $U$ of $\mc{Z} \to S$ contains $z$, by \cite[Tag 01V9]{stacks-project}. Since $\mc{Z}$ is proper over $S$, $f(\mc{Z} - U)$ is a closed subset of $S$ which does not contain $\mf{m}$, and must therefore be empty. Hence $\mc{Z}-U$ is empty, and $\mc{Z}$ is smooth over $S$.  
    \end{proof}
    \begin{cor}\label{corH5Cubic}
        Let $X$ be a smooth complete intersection in $\mathbb{P}^r_k$, of dimension $d = 2n + 1$ and multidegree $\ul{a}$. Assume that $(d, \ul{a}) \in \{(5, (3)), (3, (3, 2)), (7, (2, 2))\}$. Then the motive of $X$ is of abelian type. In particular, for any Chow--K\"unneth decomposition of $X$, there is an isomorphism
        \begin{gather*}
            \mf{h}^d(X) \cong \mf{h}^1(A)(-n),
        \end{gather*}
        where $A$ is some abelian variety over $k$. If $k$ is equipped with an embedding into $\mathbb{C}$, then there exists a distinguished choice for such an $A$.
    \end{cor}
    \begin{proof}
        There exists a discrete valuation ring $R$ with residue field $k$, and fraction field $K$ of characteristic zero \cite[Theorem 29.1]{Matsumura1989}. Taking $\mc{Y} = \mathbb{P}^r_R$ in lemma \ref{lemFlatnessCrit}, we see that if we arbitrarily lift homogeneous equations for $X$ to $R$, we obtain a relative smooth complete intersection $\mc{X}$ over $R$ with special fiber $X$. Now by spreading out $\mc{X}_K$ over a finitely generated subfield $L \subseteq K$, and choosing an embedding $L \to \mathbb{C}$, we obtain a lifting datum for $X$. 
        \par
        Note that by proposition \ref{propFd}, any smooth complete intersection has a Chow--K\"unneth decomposition, and by lemma \ref{lemFdUniqueCK}, a Chow--K\"unneth decomposition of a finite-dimensional motive is unique up to isomorphism. By proposition \ref{propGenChVanish} and our assumption on $(d, \ul{a})$, we have $\CH_i(\mc{X}_\mathbb{C}) \cong \Q$ for $0 \le i \le \lfloor \frac{d}{2} \rfloor - 1$. Then \cite[Remark 3.8]{Vial2013} implies that $A^i(\mc{X}_\mathbb{C})$ is representable for all $i$. Appealing to theorem \ref{thmCKSpecialize} now completes the proof.
    \end{proof}
    \begin{rem}\label{remCubicFivefold}
        Let $X$ be a general smooth cubic fivefold over a subfield $k$ of $\mathbb{C}$, and $F$ the Fano surface of planes on $X$. Collino in \cite{Collino1986} proved that the incidence correspondence induces an isomorphism from $\Alb(F_{\mathbb{C}})$ to the intermediate Jacobian of $X_{\mathbb{C}}$. By \cite[Proposition 5.1]{ACMV2020}, this map descends to $k$, and gives an isomorphism from $\Alb(F)$ to the distinguished abelian variety $A$ that corollary \ref{corH5Cubic} associates with $X$. 
        \par
        Suppose instead that $X$ is a smooth cubic fivefold over a field $k$ of positive characteristic, that the Fano surface $F$ of planes on $X$ is smooth, and that $X$ lifts to a cubic fivefold in characteristic zero which is general in the sense of Collino's result. By specialization, here as well the incidence correspondence induces an isomorphism $\mf{h}^5(X) \cong \mf{h}^1(\Alb(F))(-2)$. See the proof of theorem \ref{thmFanoAbType} for a more detailed argument along these lines.
    \end{rem}
    \begin{rem}
        Let $X$ be a smooth proper scheme over $k$. In \cite{ACMV2023}, the notion of a regular homomorphism from families of algebraically trivial codimension-$i$ cycles on $X$ to an abelian variety is defined, generalizing the classical definition in \cite{Murre1985}. A codimension-$i$ algebraic representative for $X$ is then defined to be an initial abelian variety $\Ab^i_{X / k}$ receiving such a regular homomorphism. When $i \in \{1,2,\dim X\}$, Theorem 2 of \cite{ACMV2023} establishes the existence of such an algebraic representative for $X$. For $X$ a smooth complete intersection as in corollary \ref{corH5Cubic}, it seems possible that a codimension-$\left(\frac{\dim X+1}{2}\right)$ algebraic representative exists for $X$, and that this algebraic representative is isogenous to the abelian variety $A$ of the corollary. We do not develop this further here, as it is not needed for our immediate applications. 
    \end{rem}
    Next, we show via a motivic specialization argument that the Fano surface of a smooth cubic threefold over any field has motive of abelian type. Over fields of characteristic not two, the below result is due to Diaz \cite{Diaz2016}. 
    \begin{thm}\label{thmFanoAbType}
        Let $X$ be a smooth cubic threefold over $k$, and $F$ the Fano surface of $X$. Then any Chow--K\"unneth decomposition of $F$ satisfies $\mf{h}^2(F) \cong \mf{h}^2(\Alb(F))$. 
    \end{thm}
    \begin{proof}
        Fix a Chow--K\"unneth decomposition 
        \begin{gather}\label{eqFanoCK}
            \mf{h}(F) \cong \mathbf{1} \oplus \mf{h}^1(F) \oplus \mf{h}^2(F) \oplus \mf{h}^1(F)(-1) \oplus \mathbf{1}(-2)
        \end{gather}
        as in \cite[Theorem 4.4]{Scholl1994}, so that $\mf{h}^1(F) \cong \mf{h}^1(\Alb(F))$. When $k$ has characteristic not two, \cite[Theorem 1.2]{Diaz2016} shows that $\mf{h}^2(F) \cong \mf{h}^2(\Alb(F))$. In particular, the motive of $F$ is finite-dimensional in this case.
        \par
        Now suppose that $k$ is arbitrary. Then lemma \ref{lemFlatnessCrit} implies that $X$ lifts to a relative smooth cubic threefold over a DVR $R$ with residue field $k$ and characteristic zero fraction field $K$. This induces a lift of $F$ to a relative Fano surface $\mc{F}$, which is smooth over $R$ by \cite[Theorem 4.2]{AK1977}. We fix a Chow--K\"unneth decomposition of the form \eqref{eqFanoCK} for the generic fiber $\mc{F}_K$ of $\mc{F}$. Since $\mc{F}_K$ has finite-dimensional motive by Diaz's result, its specialization $F$ does as well. Lemma \ref{lemFdUniqueCK} then implies that all Chow--K\"unneth decompositions of $\mf{h}(F)$ are isomorphic, so in particular, the specialization of $\mf{h}^i(\mc{F}_K)$ is isomorphic to $\mf{h}^i(F)$.
        \par
        By the N\'eron--Ogg--Shafarevich criterion, $\Alb(\mc{F}_K)$ has good reduction. Since its reduction $A$ satisfies $\mf{h}^1(A) \cong \mf{h}^1(\Alb(F))$, lemma \ref{lemAbH1} shows that $A$ is isogenous to $\Alb(F)$. We thus have $\mf{h}^2(\Alb(F)) \cong \mf{h}^2(A)$, which by specialization is isomorphic to $\mf{h}^2(F)$, as desired.
    \end{proof}
    The Tate conjecture for the Fano surface of a smooth cubic threefold over a finitely generated field was originally proved in \cite{Roulleau2013} and \cite[Remark 4.2]{DLR2017}. It is also a formal consequence of the above theorem and the Tate conjecture for divisors on abelian varieties (established in \cite{Tate1966}, \cite{Faltings1986}, \cite{Zarhin1974a}, \cite{Zarhin1974b}, and in Mori's results in \cite{MB1985}).
    \par
   \begin{rem}\label{remFdList}
        Remark 2.2 of \cite{BL2024} provides a partial list of varieties over $\mathbb{C}$ known to have finite-dimensional motive. It seems likely that motivic specialization arguments (as in the proof of corollary \ref{corH5Cubic}), coupled with \cite[Theorem 2]{Vial2017}, can extend many such examples to essentially arbitrary base fields. For instance, the statement of \cite[Corollary 17]{Laterveer2017} immediately extends via this method to arbitrary base fields (though note that one must take some care in characteristic three, since there the explicit equations listed in loc. cit. may not define smooth varieties). In particular, the Fano variety of lines on a smooth cubic fivefold over any field has finite-dimensional motive. The main theorem of \cite{Laterveer2016} similarly yields an explicit family of K3 surfaces with finite-dimensional motive over any field of characteristic not two, and the main theorem of \cite{BL2024} yields another such family over any field of characteristic not three. Next, from \cite{Laterveer2018} we obtain a family of smooth cubic fourfolds with finite-dimensional motive over any field of characteristic not three. We will later make use of this example in corollary \ref{corTBForCubicFour} and theorem \ref{thmBBM}. 
        \par
        Finally, let $X$ be a Gushel--Mukai variety \cite[Definition 2.1]{FM2025} over a field $k$ of characteristic not two, with $\dim X \in \{3, 5\}$. Then $X$ has finite-dimensional motive. To see this, we can assume that $k$ is algebraically closed, and use lemma \ref{lemFlatnessCrit} to lift $X$ to a Gushel--Mukai variety in characteristic zero (see also \cite[Lemma 2.2]{FM2025}). The finite-dimensionality then follows from \cite[Proposition 7.1]{FM2023} and specialization. 
   \end{rem}

    \section{Conjectures on algebraic cycles}
    We now review the conjectures on algebraic cycles we will be concerned with, and give some sufficient conditions for them. Proposition \ref{propBBCrit} shows that a sufficiently strong form of the Tate conjecture over finite fields (conjecture \ref{conjGenBeilinson}) implies the Beilinson--Bloch conjecture over function fields, so our main results will hinge on verifying cases of the former conjecture.

    \subsection{The generalized conjectures of Tate and Beilinson}\label{ssecTB}
    For $k$ a field, $X$ a scheme with finite-type, separated structure morphism $f: X \to \Spec k$, and $i, j \in \Z$, we define the $\ell$-adic \'etale homology of $X$ as in \cite[Example 6.7]{Jannsen1990}, via
    \begin{gather*}
        H_i(X_{\ol{k}}, \Q_\ell(j)) = H^{-i}(X_{\ol{k}}, (R{\ol{f}}^! \Q_\ell)(-j)).
    \end{gather*}
    Here $\ol{f}$ is the base change of $f$ to $\ol{k}$.
    \par
    Define the (rational) motivic homology and cohomology of $X$ as in \cite[5.12 and Example 6.12]{Jannsen1990}, via
    \begin{align*}
        H_i^{\mc{M}}(X, \Q(j)) &= K'_{i-2j}(X)^{(-j)},\\
        H^i_{\mc{M}}(X, \Q(j)) &= K_{2j-i}(X)^{(j)}.
    \end{align*}
    Here $K_m(X)$ denotes the $m$-th Quillen $K$-theory of vector bundles on $X$, and $K_{m}(X)^{(r)}$ denotes the simultaneous eigenspace of $K_{m}(X) \otimes_\Z \Q$ for the Adams operations $\{\psi^n: n \in \Z - \{0\}\}$, on which $\psi^n$ acts as multiplication by $n^r$. The definition of $K'_{m}(X)^{(r)}$ is the same, except using the $K$-theory of coherent sheaves on $X$ instead of vector bundles. See \cite[\hphantom{}1.5 and 7.2]{Soule1985} for the definition of the Adams operations. 
    \par
    For context, we note that for a smooth variety $X$ over $k$, and $i$, $j \in \Z$, there is a canonical isomorphism
    \begin{gather*}
        H^i_{\mc{M}}(X, \Q(j)) \cong \CH^j(X, 2j-i)_\Q,
    \end{gather*}
    cf. \cite[page 269(vii)]{Bloch1986} and \cite[Example 6.12]{Jannsen1990}. On the right is Bloch's higher Chow group (with $\Q$-coefficients). For such an $X$, we thus let $H^i_{\mc{M}}(X, \Z(j))$ denote $\CH^j(X, 2j-i)_\Z$. The more modern definitions of integral motivic cohomology and Borel--Moore homology are not in terms of higher Chow groups or $K$-theory \cite[Definitions 3.4 and 16.20]{MVW2006}, but this comparatively classical approach will suffice for us. Corollary 5.3 of \cite{Levine1994} shows that $\Q$-linear correspondences of smooth projective varieties act on higher Chow groups with $\Q$-coefficients, so $X \mapsto H^i_{\mc{M}}(X, \Q(j))$ extends to a covariant functor on $\mc{M}(k)$.
    \par
    In any case, we will not be too preoccupied with the precise definition of motivic (co)homology. Both \'etale and motivic (co)homology form twisted Poincar\'e duality theories in the sense of \cite[Definition 6.1]{Jannsen1990}\footnote{There is a caveat in the case of motivic (co)homology, which forms an \enquote{absolute} twisted Poincar\'e duality theory, rather than a relative one. See \cite[Example 6.12]{Jannsen1990}.}, so that we have the following purely formal tools for working with them. Out of an abundance of caution, in the remainder of this section we work in the category of quasi-projective schemes over the base field, unless otherwise noted\footnote{We could possibly get away with weaker hypotheses, but \cite{Soule1985} as stated works in this generality.}.
    \begin{prop}\label{propHomologyProps}
        For $X$ a quasi-projective scheme over $k$, let $H_i(X, j)$ denote either rational motivic or \'etale homology, and $H^i(X, j)$ the corresponding cohomology.
        \begin{enumerate}[(i)]
            \item The homology $H_i(X, j)$ is covariant in $X$ with respect to proper morphisms, and contravariant with respect to \'etale morphisms. The cohomology $H^i(X, j)$ is contravariant in $X$.
            \item For $0 \le i \le \dim X$, there is a canonical isomorphism $H_{2i}^{\mc{M}}(X, \Q(i)) \cong \CH_i(X)$ \cite[Lemma 6.12.4]{Jannsen1990}. For $i, j \in \Z$, there is a regulator map
            \begin{gather*}
                H_{i}^{\mc{M}}(X, \Q(j)) \otimes_{\Q} \Q_\ell \to H_{i}(X_{\ol{k}}, \Q_\ell(j))
            \end{gather*}
            \cite[Example 8.3]{Jannsen1990}.
            \item There is a cap product
            \begin{gather*}
                \cap: H_i(X, j) \otimes H^m(X, n) \to H_{i-m}(X, j-n)
            \end{gather*}
            compatible with contravariance for \'etale morphisms. For a proper morphism $f: Y \to X$, the cap product satisfies the projection formula
            \begin{gather*}
                f_*(\alpha \cap f^*(\beta)) = f_*(\alpha) \cap \beta.
            \end{gather*}
            \item For $X$ irreducible of dimension $d$, there is a fundamental class $\eta_X \in H_{2d}(X, d)$, functorial with respect to contravariance for \'etale morphisms. When $X$ is additionally smooth, cap product with $\eta_X$ is an isomorphism
            \begin{gather}\label{eqPoinDual}
                H^{2d-i}(X, d-j) \cong H_i(X, j)
            \end{gather}
            compatible with regulator maps. Cohomology is thus covariant for proper morphisms of smooth varieties. These properties naturally extend to non-irreducible smooth schemes via
            \begin{gather*}
                H^i(X \sqcup Y, j) \cong H^i(X, j) \oplus H^i(Y, j).
            \end{gather*}
            For $X$ smooth and irreducible, and $f: Y \to X$ finite \'etale of degree $m$, we have $f_* (\eta_Y) = m \cdot \eta_X$, so the projection formula gives $f_*(f^*(\alpha)) = m \cdot \alpha$ for any $\alpha \in H^i(X, j)$.
            \item For $Z \subseteq X$ closed, with open complement $U = X - Z$, there is a long exact sequence
            \begin{gather}\label{eqHomLes}
                ... \to H_i(Z, j) \to H_{i}(X, j) \to H_{i}(U, j) \to H_{i-1}(Z, j) \to...
            \end{gather}
            compatible with regulator maps.
        \end{enumerate}
    \end{prop}

    In terms of motivic and \'etale homology, Jannsen generalized conjectures of Tate \cite[Conjecture $T^j(X)$]{Tate1994} and Beilinson \cite[page 3]{Beilinson1987} to arbitrary finite-type separated schemes over $\F_q$.
    \begin{conj}[$\TB_{i, j}(X)$: {\cite[Conjectures 12.4a and 12.6b]{Jannsen1990}}]\label{conjGenBeilinson}
        For $X$ a finite-type separated scheme (not necessarily quasi-projective) over a finite field $\F_q$, and for $i, j \in \Z$,
        \begin{enumerate}[(i)]
            \item the regulator map induces an isomorphism
            \begin{gather*}
            H_{i}^{\mc{M}}(X, \Q(j)) \otimes_{\Q} \Q_\ell \xrightarrow{\sim} H_{i}(X_{\ol{\F}_q}, \Q_\ell(j))^{G_{\F_q}};
            \end{gather*}
            \item the $\ell$-adic homology $H_{i}(X_{\ol{\F}_q}, \Q_\ell(j))$ is 1-semisimple.
        \end{enumerate}
    \end{conj}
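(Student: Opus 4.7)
Since $\TB(X)$ is stated here as an open conjecture in general, my \enquote{proof proposal} is really a strategy for verifying it in the specific settings needed later in the paper. The plan is to use the formal properties of proposition \ref{propHomologyProps} to split $\TB(X)$ into pieces along a Chow--K\"unneth decomposition, and then verify each piece individually. Because both the regulator map and the Frobenius action respect direct sums of motives, $\TB(X)$ will hold as soon as $\mathfrak{h}(X)$ decomposes into summands on which the conjecture is already known.

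Consider first a smooth cubic threefold $X$ over $\F_q$. Propositions \ref{propFd} and \ref{propH3Cubic} together give
\begin{gather*}
    \mathfrak{h}(X) \cong \mathbf{1} \oplus \mathbf{1}(-1) \oplus \mathfrak{h}^1(A)(-1) \oplus \mathbf{1}(-2) \oplus \mathbf{1}(-3),
\end{gather*}
where $A$ is the Albanese of the Fano surface. On the Tate twists $\mathbf{1}(j)$ the conjecture is trivial, and on $\mathfrak{h}^1(A)(-1)$ it is equivalent, via the dictionary of \cite{Jannsen2007}, to the strong Birch and Swinnerton-Dyer conjecture for the abelian variety $A$ over $\F_q$, which is classical. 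So $\TB$ over finite fields for smooth cubic threefolds is tractable; the real work of the paper must lie at a higher dimension.

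That dimension is four: to reach cubic threefolds over a function field $\F_q(T)$ via Lefschetz pencils, one needs $\TB$ for the total space and every closed fiber of a Lefschetz pencil of cubic fourfolds over $\F_q$. Unlike the threefold case, a smooth cubic fourfold carries a rich middle motive $\mathfrak{h}^4$ supporting transcendental cohomology, and the Tate conjecture here is only known in specific cases, notably for the Fermat cubic fourfold in infinitely many characteristics. The plan is to take the Fermat case as input and propagate $\TB$ through the pencil by combining Koll\'ar's unirationality of cubic hypersurfaces \cite{Kollar2002} with Vial's criterion \cite{Vial2017} to show that the relevant motives remain of abelian type, thereby reducing $\TB$ on the smooth fourfolds to the abelian variety case handled above. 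The hard part will be the singular fibers and the total space of the pencil itself, where the geometry is no longer that of a smooth cubic hypersurface and the direct unirationality argument degenerates; controlling this will presumably require a careful analysis of the pencil's local structure together with the full force of Vial's abelian-type criterion.
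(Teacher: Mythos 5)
This statement is a \emph{conjecture}, not a theorem, so the paper offers no proof of it and you are right to read it that way. The useful comparison is between your strategy for the cases needed downstream and the route the paper actually takes, and here there are several substantive divergences.

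For smooth cubic threefolds over $\F_q$, you propose to use the Chow--K\"unneth decomposition together with proposition \ref{propH3Cubic} to reduce $\TB$ to the abelian variety $A$, and you describe the resulting statement for $A$ as \enquote{the strong Birch and Swinnerton-Dyer conjecture for $A$ over $\F_q$, which is classical.} This is conceptually off: BSD is a conjecture about abelian varieties over \emph{global} fields, not finite fields. What you need for $A$ over $\F_q$ is the Tate conjecture for abelian varieties over finite fields and the semisimplicity of Frobenius, both due to Tate; these are precisely the ingredients in corollary \ref{corTBCors}(i). Your reduction-to-$A$ route does work once phrased this way, but the paper avoids it entirely. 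Corollary \ref{corTBCubic} instead passes to a finite extension to get a rational point, invokes Koll\'ar's unirationality of cubic hypersurfaces, and applies Vial's criterion (via corollary \ref{corTBCors}(ii)); the identification $\mf{h}^3(Z) \cong \mf{h}^1(A)(-1)$ is proved later in corollary \ref{corH3Cubic} and is never used to establish $\TB$ for threefolds.

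For the Lefschetz pencil, your forecast that \enquote{the direct unirationality argument degenerates} on singular fibers and that the total space $Y$ requires the \enquote{full force} of Vial's criterion does not match the paper. In lemma \ref{lemTbLefschetz}, a singular fiber $F$ has at most one ordinary double point; blowing it up yields a smooth variety $B$ birational to the cubic threefold $F$, so Koll\'ar's theorem 2 still produces a dominant rational map from a product of curves, and $\TB(B)$ follows by the same mechanism as in the smooth case. The blown-up exceptional divisor $D$ is a rational surface, hence easy, and $\TB(F)$ follows from $\TB(T)$, $\TB(\Sing F)$ by the two-out-of-three property. For the total space, the paper uses the blowup structure $Y = \mathrm{Bl}_Z X$: $Y\setminus E \cong X \setminus Z$, and $E$ is a $\mathbb{P}^1$-bundle over the cubic surface $Z$, so $\TB(Y)$ follows from $\TB(X)$, $\TB(Z)$, $\TB(E)$ by two applications of two-out-of-three. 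You omit the two-out-of-three property (proposition \ref{propTBCases}(i)) entirely, yet it is the load-bearing formal tool in lemma \ref{lemTbLefschetz}; without it the passage between open and closed pieces, and between smooth and singular fibers, does not go through. The genuinely hard input is the one you do correctly identify: $\TB$ for some smooth cubic fourfold to start the process, supplied by the Fermat cubic fourfold (abelian type by Shioda--Katsura, Tate conjecture by Tate).
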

    Here we define a finite-dimensional $G_{\F_q}$-representation $V$ to be \textbf{1-semisimple} if 1 is a semisimple eigenvalue of the geometric Frobenius $\Frob_q \in G_{\F_q}$ acting on $V$. We denote the truth value of the conjecture for a given $(X, i, j)$ by $\TB_{i, j}(X)$. By $\TB(X)$ we mean the truth value of the conjecture for a given $X$ and all $i, j \in \Z$. When $X$ is of pure dimension $d$, we define $\TB^{i, j}(X) = \TB_{2d-i, d-j}(X)$ (so the notation is compatible with Poincar\'e duality for a smooth $X$). The conjecture $\TB$, taken for all smooth projective varieties over $\F_q$, simultaneously generalizes the Tate--Beilinson and Parshin conjectures \cite[Conjecture 51 and Definition 55]{Kahn2005}. See loc. cit. for an overview of the general consequences of these conjectures.
    \par
    We now give some sufficient conditions for $\TB_{i, j}(X)$ (and related statements) to hold.
    \begin{prop}[{\cite[Proposition 3.1 and Lemma 3.6]{Broe2025}}]\label{propHomologyExSeq}
        For $X$ a finite-type, separated scheme over $\F_q$, $Z \subseteq X$ a closed subscheme, $U = X - Z$ the open complement, and $j \in \Z$, there is a commutative diagram with exact rows
        \[
            \begin{tikzcd}
                \CH_j(Z) \otimes_\Q \Q_\ell \arrow[r] \arrow[d] & \CH_j(X) \otimes_\Q \Q_\ell \arrow[r] \arrow[d] & \CH_j(U) \otimes_\Q \Q_\ell \arrow[r, overlay] \arrow[d] & 0 \\
                H_{2j}(Z_{\ol{\F}_q}, \Q_\ell(j))^{\Frob_q \sim 1} \arrow[r] & H_{2j}(X_{\ol{\F}_q}, \Q_\ell(j))^{\Frob_q \sim 1} \arrow[r] & H_{2j}(U_{\ol{\F}_q}, \Q_\ell(j))^{\Frob_q \sim 1} \arrow[r, overlay] & 0.
            \end{tikzcd}
        \]
        Here the superscript $\Frob_q \sim 1$ denotes the generalized eigenspace of the eigenvalue 1 of $\Frob_q$. In particular, $\TB_{2j, j}(Z)$ and $\TB_{2j, j}(X)$ together imply $\TB_{2j, j}(U)$.
    \end{prop}

    \begin{prop}\label{propWeakLefschetz}
        Let $X$ be projective over $\F_q$, and let $Z \subseteq X$ be a closed subscheme, with $U = X - Z$ smooth, affine, and of pure dimension $d$. 
        \begin{enumerate}[(i)]
            \item The weak Lefschetz theorem holds for \'etale homology: for any $j \in \Z$, the natural map
            \begin{gather*}
                H_{i}(Z_{\ol{\F}_q}, \Q_\ell(j)) \to H_{i}(X_{\ol{\F}_q}, \Q_\ell(j))
            \end{gather*} 
            is an isomorphism for $i < d - 1$, and surjective for $i = d - 1$. 
            \item In particular, if $2j < d-1$ and $\dim_{\Q_\ell} H_{2j}(X_{\ol{\F}_q}, \Q_\ell(j))^{\Frob_q \sim 1} = 1$, then the cycle class map
            \begin{gather*}
                \CH_j(Z) \otimes_{\Q} \Q_\ell \to H_{2j}(Z_{\ol{\F}_q}, \Q_\ell(j))^{\Frob_q \sim 1} 
            \end{gather*}
            is surjective.
        \end{enumerate} 
    \end{prop}
    \begin{proof}
        By Artin vanishing \cite[Tag 0F0W]{stacks-project} and Poincar\'e duality \eqref{eqPoinDual}, we have
        \begin{gather*}
            H_{i}(U_{\ol{\F}_q}, \Q_\ell(j)) \cong H^{2d-i}(U_{\ol{\F}_q}, \Q_\ell(d-j)) = 0
        \end{gather*}
        for $i < d$. Claim (i) now formally follows from the long exact sequence \eqref{eqHomLes}. Fixing a closed immersion $Z \to \mathbb{P}^n$, claim (ii) is immediate from (i) and the fact that any nonzero effective $j$-cycle in $Z$ is numerically nontrivial in $\mathbb{P}^n$.
    \end{proof}

    \begin{prop}\label{propTBCases}
        Let $X$ be quasi-projective over $\F_q$.
        \begin{enumerate}[(i)]
            \item If $Z$ is a closed subscheme of $X$ with open complement $U$, and $\TB$ holds for any two of $X$, $Z$ and $U$, then it also holds for the third one \cite[Theorem 12.7 b)]{Jannsen1990}.
            \item If $X$ is smooth, $L / \F_q$ is a finite extension, and $\TB_{i, j}(X_L)$ holds (where $X_L$ is considered as a scheme over $L$), then $\TB_{i, j}(X)$ holds.
            \item If $X$ is a smooth projective variety with finite-dimensional motive, the Tate conjecture holds for $X$, and for all $j$, $H^{2j}(X_{\ol{\F}_q}, \Q_\ell(j))$ is 1-semisimple, then $\TB(X)$ holds (\cite[Corollary 13.5.10]{Jannsen2007}).
            \item If $X$ is a smooth projective variety, rational and numerical equivalence coincide on $\CH^j(X)$, and the Tate conjecture holds for cycles of codimension $j$ in $X$, then $\TB^{2j, j}(X)$ holds \cite[Theorem 2.9]{Tate1994}.
        \end{enumerate}
    \end{prop}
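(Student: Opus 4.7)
Parts (i) and (iii) are cited from \cite{Jannsen1990} and \cite{Jannsen2007}, so the content to prove is (ii). The plan is to use the finite \'etale projection $f : X_L \to X$ of degree $m := [L : \F_q]$ as a transfer, along with the identification $(X_L)_{\ol{L}} = X_{\ol{\F}_q}$ coming from $\ol{L} = \ol{\F}_q$. Both halves of $\TB(X)$ will be derived formally from $\TB(X_L)$, with the main bookkeeping being the Galois-theoretic identification of the targets of the two regulator maps.

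By proposition \ref{propHomologyProps}(iv), applied via Poincar\'e duality on the smooth variety $X$, we have $f_* \circ f^* = m \cdot \id$ on both motivic and \'etale homology, and by naturality the regulator commutes with $f^*$ (\'etaleness) and $f_*$ (properness). Set $V := H_i(X_{\ol{\F}_q}, \Q_\ell(j))$; then the target of the regulator for $X_L$ is $V^{G_L}$. A Frobenius reciprocity / Shapiro-type argument identifies this with $\bigl(\bigoplus_{\sigma} V\bigr)^{G_{\F_q}}$, where $\sigma$ ranges over $\F_q$-embeddings $L \hookrightarrow \ol{\F}_q$ and $(X_L)_{\ol{\F}_q} = \bigsqcup_\sigma X_{\ol{\F}_q}$; in these terms $f^*$ corresponds to the inclusion $V^{G_{\F_q}} \hookrightarrow V^{G_L}$ and $f_*$ to the trace $v \mapsto \sum_{h \in \Gal(L / \F_q)} h \cdot v$. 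Injectivity of the regulator $r_X$ then follows since $f^*$ is split injective on motivic homology via $\tfrac{1}{m} f_*$; for surjectivity, given $\beta \in V^{G_{\F_q}}$, lift its image in $V^{G_L}$ through the assumed isomorphism $r_{X_L}$ to some $\gamma$, and compute $r_X\bigl(\tfrac{1}{m} f_*(\gamma)\bigr) = \tfrac{1}{m} \sum_h h \cdot \beta = \beta$.

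For semisimplicity, the goal is to show that semisimplicity of $\Frob_L = \Frob_q^m$ at eigenvalue $1$ implies the same for $\Frob_q$. On the generalized $1$-eigenspace of $\Frob_q$, write $\Frob_q = 1 + N$ with $N$ nilpotent; then $\Frob_q^m - 1 = N \cdot \bigl(m + \binom{m}{2} N + \cdots\bigr)$, and the second factor is invertible in $\Q_\ell$ since its constant term is the nonzero scalar $m$. Hence $\Frob_q^m$ restricted to this subspace equals the identity if and only if $N = 0$, which is the desired semisimplicity of $\Frob_q$ at $1$. The main technical obstacle in the whole argument is the Galois-equivariant identification of invariants in the second paragraph; once this is in place, the conclusion of $\TB(X)$ is formal.
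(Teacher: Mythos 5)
Your argument is correct and follows the same transfer strategy as the paper: use the finite \'etale degree-$m$ projection $f: X_L \to X$, the projection formula $f_* f^* = m \cdot \id$ from Proposition~\ref{propHomologyProps}(iv), and naturality of the regulator in a commutative square to chase injectivity through $f^*$ and surjectivity through $f_*$, with the Shapiro identification $H_i((X_L)_{\ol{\F}_q},\Q_\ell(j))^{G_{\F_q}} \cong V^{G_L}$ mediating between the two targets. The one place you diverge is the semisimplicity part: the paper cites \cite[Lemma 4.6]{Broe2025} for the implication that $1$-semisimplicity of $\Frob_q^m$ forces $1$-semisimplicity of $\Frob_q$, whereas you give the short self-contained computation (on the generalized $1$-eigenspace of $\Frob_q$, write $\Frob_q = 1 + N$ and factor $\Frob_q^m - 1 = N \cdot (m + \binom{m}{2}N + \cdots)$ with the second factor invertible); this is an adequate direct proof of the cited lemma and is fine to include, though one should note explicitly that the generalized $1$-eigenspace of $\Frob_q$ sits inside that of $\Frob_q^m$ so that the hypothesis actually applies there.
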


    \begin{proof}
        Only claim (ii) requires a proof. The argument of \cite[Lemma 3.8]{Broe2025} shows that the $1$-semisimplicity of $H_i(X_{\ol{L}}, \Q_\ell(j))$ (considered as a $G_L$-representation) implies that of $H_i(X_{\ol{\F}_q}, \Q_\ell(j))$. For the injectivity of the cycle class map, let $f: X_L \to X$ be the natural morphism, and consider the commutative diagram of $\Q_\ell$-vector spaces
        \[
            \begin{tikzcd}
                H_i^{\mc{M}}(X, \Q(j)) \otimes_\Q \Q_\ell \arrow[r] \arrow{d}{f^*} & H_i(X_{\ol{\F}_q}, \Q_\ell(j))^{G_{\F_q}} \arrow{d}{f^*} \\
                H_i^{\mc{M}}(X_L, \Q(j)) \otimes_\Q \Q_\ell \arrow[r] & H_i(X_{\ol{L}}, \Q_\ell(j))^{G_{L}}.
            \end{tikzcd}
        \]
        The left arrow is injective by the projection formula (proposition \ref{propHomologyProps}(iv)), and the bottom arrow is injective by $\TB_{i, j}(X_L)$, so the top arrow is injective.
        \par
        For the surjectivity, consider
        \[
            \begin{tikzcd}
                H_i^{\mc{M}}(X, \Q(j)) \otimes_\Q \Q_\ell \arrow[r] & H_i(X_{\ol{\F}_q}, \Q_\ell(j))^{G_{\F_q}}  \\
                H_i^{\mc{M}}(X_L, \Q(j)) \otimes_\Q \Q_\ell \arrow{u}{f_*} \arrow[r] & H_i(X_{\ol{L}}, \Q_\ell(j))^{G_{L}} \arrow{u}{f_*}.
            \end{tikzcd}
        \]
        The bottom arrow is surjective by $\TB_{i, j}(X_L)$, and the right arrow is surjective by the projection formula, so the top arrow is surjective.
    \end{proof}
    We remark that if $X$ is a smooth projective variety over $\F_q$, then rational and numerical equivalence coincide on $\CH^j(X)$ when $j = 1$ (by the theory of the Picard variety and \cite[Lemma 2.3]{Broe2025}) or when $j = \dim X$ (by the theory of the Albanese variety and proposition \ref{propAlbKerFin}).

    \begin{cor}\label{corTBQuadric}
        The conjecture $\TB(X)$ holds for any smooth quadric hypersurface $X$ over $\F_q$.
    \end{cor}
    \begin{proof}
        Using proposition \ref{propTBCases}(ii), we may assume that $X$ has a rational point. The claim then follows from \cite[Proposition 70.1]{EKM2008} and induction on the dimension of $X$.
    \end{proof}

    \begin{cor}\label{corTBCors}
        The conjecture $\TB(X)$ holds for a smooth projective variety $X$ over $\F_q$ in the following cases:
        \begin{enumerate}[(i)]
            \item $X$ has motive of abelian type and satisfies the Tate conjecture;
            \item $X$ has dimension $\le 3$, and there exists a dominant rational map $Y \dashrightarrow X$ from a smooth projective variety $Y$, such that $Y$ has motive of abelian type, and satisfies the Tate conjecture for divisors.
        \end{enumerate}
    \end{cor}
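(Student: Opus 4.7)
The strategy is to reduce both parts to proposition \ref{propTBCases}(iii).

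For (i), I would verify the three hypotheses of \ref{propTBCases}(iii) in turn. Finite-dimensionality of $h(X)$ was recorded in section 2 as a general property of motives of abelian type, and the Tate conjecture is assumed. For the 1-semisimplicity of Frobenius acting on $H^{2i}(X_{\ol{\F}_q}, \Q_\ell(i))$, I would invoke the classical theorem (due to Weil, and used in Tate's work on abelian varieties) that Frobenius acts semisimply on the full $\ell$-adic cohomology of an abelian variety over $\F_q$. Semisimplicity of a Frobenius action is preserved under direct sums, direct summands, tensor products, and Tate twists -- precisely the operations that generate the category of abelian type motives from abelian variety motives -- so it holds for the cohomology of $X$.

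For (ii), my plan is to reduce to case (i) by establishing that $h(X)$ is itself of abelian type and that $X$ satisfies the full Tate conjecture. The first assertion follows from the results of \cite{Vial2017} already used in the proof of proposition \ref{propFd}: a smooth projective variety of dimension at most 3 that is rationally dominated by a smooth projective variety with motive of abelian type inherits this property. Once this is established, the semisimplicity argument from case (i) applies to $X$, and then by Poincar\'e duality and the hard Lefschetz theorem (both available over $\F_q$ via the Weil conjectures), the Tate conjecture for $X$ reduces to the Tate conjecture for divisors on $X$.

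To deduce Tate for divisors on $X$ from the hypothesis on $Y$, I would first cut $Y$ down to dimension $\dim X$: by Bertini, a general complete intersection $C \subset Y$ of codimension $\dim Y - \dim X$ is smooth, and for general choices the restricted rational map $C \dashrightarrow X$ remains dominant and is now generically finite. The closure of its graph in $C \times X$ has dimension $\dim X \le 3$, so it can be resolved by Cossart--Piltant resolution of singularities in positive characteristic, producing a smooth projective variety $\tilde{C}$ with a birational morphism $\tilde{C} \to C$ and a generically finite surjective morphism $f \colon \tilde{C} \to X$. The projection formula $f_* f^* = \deg(f) \cdot \id$ then exhibits $\NS(X)_{\Q_\ell} \to H^2(X_{\ol{\F}_q}, \Q_\ell(1))^{G_{\F_q}}$ as a retract of the corresponding map for $\tilde{C}$, so Tate for divisors on $\tilde{C}$ would imply Tate for divisors on $X$; the former in turn descends from Tate for divisors on $Y$ via the Lefschetz hyperplane theorem (relating $H^2$ of $Y$ and $C$), the blowup formula, and the algebraicity of exceptional divisor classes. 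The main obstacle is this last chain of descents, especially in low dimension: Lefschetz hyperplane is only fully informative for $H^2$ when $\dim X \ge 3$, so the cases $\dim X \le 2$ may require separate treatment using directly the small size of $H^2$ in those dimensions.
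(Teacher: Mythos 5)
Your argument for part (i) is essentially the paper's: reduce to Proposition \ref{propTBCases}(iii), noting that abelian type motives are finite-dimensional, and that 1-semisimplicity follows from the classical semisimplicity of Frobenius on $\ell$-adic cohomology of abelian varieties (which is stable under the tensor operations generating the abelian-type category).

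Part (ii) has a real gap at its first step, and the subsequent geometric detour is both unnecessary and itself incomplete. You assert that $\mf{h}(X)$ is of abelian type, citing the results of \cite{Vial2017} used in the proof of Proposition \ref{propFd}. But what is used there (Theorem 3.12 of \cite{Vial2017}) yields only \emph{finite-dimensionality} of $\mf{h}(X)$, which is strictly weaker. The decomposition the paper actually invokes is \cite[Theorem 3.11]{Vial2017}: $\mf{h}(X) \cong Q \oplus R(-1)$ with $Q$ a summand of $\mf{h}(Y) \oplus \mf{h}(C)$ for a curve $C$, and $R$ a summand of $\mf{h}(S)$ for some $S$ of dimension $\le 2$. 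The piece $R$ lives on a surface $S$ that is not known to be of abelian type, so neither is $\mf{h}(X)$ in general; this is precisely why the paper claims semisimplicity only \emph{in degrees $\le 3$} and then invokes hard Lefschetz, rather than simply applying case (i) to $X$. With the abelian-type claim removed, your strategy of ``reduce (ii) to (i) for $X$ itself'' collapses.

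Your fallback geometric route for Tate on divisors (slicing $Y$ by a complete intersection, resolving the graph via Cossart--Piltant, using $f_* f^* = \deg f$) is far heavier than what is needed and, as you yourself note, is not fully in hand: the Lefschetz hyperplane argument only identifies $H^2(Y)$ with $H^2(C)$ when $\dim C \ge 3$, so $\dim X \le 2$ is not covered, and controlling the algebraicity of the new classes introduced by the resolution $\tilde{C} \to \ol{\Gamma}$ requires work you have not supplied. The paper's route is much shorter: read off Tate for divisors and semisimplicity in degrees $\le 3$ directly from Vial's decomposition (the $Q$-part is handled by the hypothesis on $Y$ together with curves, and the $R(-1)$-part contributes only Tate twists to $H^2$ and a summand of $H^1(S)$, which is always of abelian type, to $H^3$), then extend to all degrees by hard Lefschetz, and conclude with Proposition \ref{propTBCases}(iii).
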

    \begin{proof}
        Claim (i) is clear from proposition \ref{propTBCases}(iii) and the semisimplicity of the $\ell$-adic cohomology of abelian varieties over finite fields, which follows from \cite[page 138]{Tate1966} and Chevalley's theorem \cite[Theorem 1]{Maculan2018}.
        \par
        In the situation of (ii), by \cite[Theorem 3.11 and Proposition 3.13]{Vial2017}, there exists a direct sum decomposition
        \begin{gather*}
            \mf{h}(X) \cong Q \oplus R(-1),
        \end{gather*}
        where $Q$ is a direct summand of $\mf{h}(Y) \oplus \mf{h}(C_1)\oplus...\oplus \mf{h}(C_m)$ for some curves $C_1,...,C_m$, and $R$ is a direct summand of $\mf{h}(S_1) \oplus...\oplus \mf{h}(S_n)$ for some varieties $S_1,...,S_n$ of dimension $\le 2$\footnote{Note that \cite{Vial2017} uses a definition of \enquote{variety} that does not require irreducibility.}. Theorem 3.12 of loc. cit. additionally implies that the motive of $X$ is finite-dimensional. Using proposition \ref{propTBCases}(ii), we are free to extend scalars and assume that all $C_i$ and $S_j$ are geometrically connected. It then follows from the decomposition of the motive that $X$ satisfies the Tate conjecture for divisors, and that $H^{2j}(X_{\ol{\F}_q}, \Q_\ell(j))$ is 1-semisimple for $j \le 1$. The full Tate conjecture for $X$, and the 1-semisimplicity of its cohomology in higher degrees, then follows from the hard Lefschetz theorem \cite[Th\'eor\`eme (4.1.1)]{Deligne1980}. We conclude by proposition \ref{propTBCases}(iii).
    \end{proof}

    \begin{cor}\label{corTBFermat}
        The conjecture $\TB(X)$ holds when $X \subset \mathbb{P}^{n+1}_{\F_q}$ is a Fermat hypersurface of degree $m$, and either
        \begin{enumerate}[(i)]
            \item $p = \character(\F_q)$ satisfies $p^r \equiv -1 \pmod{m}$ for some $r$, or
            \item $n$ is odd.
        \end{enumerate}
    \end{cor}
    \begin{proof}
        Such an $X$ has motive of abelian type by \cite[Theorem 1]{KS79}. It also satisfies the Tate conjecture, by \cite[page 102]{Tate1965} in case (i), and proposition \ref{propFd} in case (ii). The claim thus follows from corollary \ref{corTBCors}.
    \end{proof}

    \begin{cor}\label{corTBCubic}
        The conjecture $\TB(X)$ holds for any smooth cubic hypersurface $X$ over $\F_q$ of dimension $\le 3$.
    \end{cor}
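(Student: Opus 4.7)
The plan is to invoke corollary \ref{corTBCors} case by case on the dimension of $X$. If $\dim X = 1$, then $X$ is a smooth projective curve, whose motive decomposes as $\mathbf{1} \oplus \mathfrak{h}^1(X) \oplus \mathbf{1}(-1)$ with $\mathfrak{h}^1(X)$ a direct summand of the motive of the Jacobian of $X$; thus $\mathfrak{h}(X)$ is of abelian type. The Tate conjecture is automatic for curves, since $H^0$ and $H^2$ are spanned by the classes of the fundamental cycle and of a closed point. So corollary \ref{corTBCors}(i) applies.

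For $\dim X = d \in \{2, 3\}$, I would instead apply corollary \ref{corTBCors}(ii) with $Y = \mathbb{P}^d$. The motive of $\mathbb{P}^d$ is a sum of Tate twists of $\mathbf{1}$, hence trivially of abelian type and satisfying the Tate conjecture in all degrees. It then remains to exhibit a dominant rational map $\mathbb{P}^d \dashrightarrow X$. Since $X$ is a smooth cubic hypersurface in $\mathbb{P}^{d+1}$, it is cut out by a cubic form in $d + 2 \ge 4$ variables over $\F_q$. Chevalley--Warning then produces a nontrivial zero of this form, so $X(\F_q)$ is nonempty, and Kollár's theorem \cite[Theorem 1]{Kollar2002} (already invoked in proposition \ref{propFd}) exhibits $X$ as unirational over $\F_q$, yielding the desired rational map.

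No serious obstacle arises: corollary \ref{corTBCors} packages exactly the ingredients needed. The only substantive point is the invocation of Chevalley--Warning to feed Kollár's theorem a rational point, without which the unirationality criterion does not apply; this is also why dimension $1$ has to be treated by a different argument, as a smooth plane cubic curve over $\F_q$ need not possess an $\F_q$-point and, even when it does, the resulting elliptic curve is not unirational.
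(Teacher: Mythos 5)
Your proof is correct and takes essentially the same approach as the paper: reduce to Koll\'ar's unirationality theorem and corollary \ref{corTBCors}. The one genuine divergence is how you obtain a rational point on $X$ for $\dim X \ge 2$: you invoke Chevalley--Warning directly over $\F_q$ (a cubic form in $d+2 \ge 4$ variables has a nontrivial zero), whereas the paper first passes to a finite extension of $\F_q$ using proposition \ref{propTBCases}(ii), which makes the existence of a rational point automatic. Your route is slightly more elementary and self-contained; the paper's is more uniform, reusing the same descent device it needs elsewhere and working regardless of the degree of the hypersurface.

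One small factual slip in a side remark: a smooth plane cubic over $\F_q$ is a genus-one curve, and by the Hasse--Weil bound it always has an $\F_q$-rational point, since $\#X(\F_q) \ge (\sqrt{q}-1)^2 > 0$. So it is not true that such a curve ``need not possess an $\F_q$-point.'' The correct reason dimension $1$ must be treated separately is the other one you give: even with a rational point, a genus-one curve is not unirational, so Koll\'ar's theorem and corollary \ref{corTBCors}(ii) are simply inapplicable. Your fallback to corollary \ref{corTBCors}(i) in that case is fine and matches the paper's appeal to proposition \ref{propTBCases}.
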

    \begin{proof}
        The claim is clear for $\dim X = 1$, by proposition \ref{propTBCases}(iii). Assume that $\dim X > 1$. By proposition \ref{propTBCases}(ii), we may pass to a finite extension and assume that $X$ has a rational point. Then by \cite[Theorem 1]{Kollar2002}, $X$ is unirational, so corollary \ref{corTBCors} applies.
    \end{proof}

    \begin{prop}\label{propTBK3AndCubic}
        Let $X$ be a smooth cubic hypersurface over $\mathbb{F}_q$ of dimension $d$. Then $\TB^{2j, j}(X)$ holds for $j \in \{1, 2, d - 1, d\}$.
    \end{prop}
    \begin{proof}
        By corollary \ref{corTBCubic}, we may assume that $d \ge 4$. The claim then follows from combining proposition \ref{propTBCases}(iv) with the results of section 2 on Chow groups of cubic hypersurfaces. Specifically, when $j = 1$, we use the Tate conjecture for divisors on $X$; when $j = 2$, we use the agreement of rational and numerical equivalence on $\CH^2(X)$ (proposition \ref{propRatNumCH2}) and the Tate conjecture in codimension 2 for $X$ (propositions \ref{propTateForK3Cubic} and \ref{propFd}); when $j = d-1$ we argue similarly, via corollary \ref{corCHDim1Cubic}, and when $j = d$ we use the finiteness of the Albanese kernel (proposition \ref{propAlbKerFin}).
    \end{proof}
    \par
    Using the previous proposition, we can show that the Fermat cubic fourfold sits in a larger family of cubic fourfolds satisfying the conjecture $\TB$.
    \begin{cor}\label{corTBForCubicFour}
        Assume that $\character(\F_q) \neq 3$. Let $X$ be a smooth cubic fourfold over $\F_q$, defined in $\mathbb{P}^5_{\F_q}$ by an equation of the form
        \begin{gather*}
            f(x_0,...,x_4) + x_5^3 = 0,
        \end{gather*}
        where $f(x_0,...,x_4) = 0$ defines a smooth cubic threefold in $\mathbb{P}^4_{\F_q}$. Then $\TB(X)$ holds.
    \end{cor}
    \begin{proof}
        As noted in remark \ref{remFdList}, it follows from the main result of \cite{Laterveer2018}, and a specialization argument as in the proof of corollary \ref{corH5Cubic}, that $X$ has finite-dimensional motive. By proposition \ref{propTBK3AndCubic}, $\TB^{2j, j}(X)$ holds for all $j$, and proposition \ref{propTBCases}(iii) then implies $\TB(X)$.
    \end{proof}
    We note that Rapoport proved in general that the $\ell$-adic cohomology of a smooth cubic fourfold over $\F_q$ is a semisimple Galois representation (\cite{Rapoport1972}; see also \cite[page 45]{Zarhin2004}).
    \par
    Using Voevodsky's proof of the Bloch--Kato conjecture \cite{Voe2011}, one can get some limited control over the integral motivic cohomology of varieties satisfying the conjecture $\TB$.
    \begin{prop}\label{propMotCohFinGen}
        Let $X$ be a smooth projective variety over $\F_q$ such that $\TB(X)$ holds, and let $i, j \in \Z$. Then if either
        \begin{enumerate}[(i)]
            \item $\dim X \le 3$, or
            \item $i \le j + 2$,
        \end{enumerate}
        then $H^i_{\mc{M}}(X, \Z(j))$ is a finitely generated abelian group.
    \end{prop}
    \begin{proof}[Sketch of proof]
        This follows from the argument of Corollaries 69 and 70 of \cite{Kahn2005}. We will fill in some details for the reader who wishes to check them, referring to the statements of loc. cit. as numbered therein. The crucial point is to observe, from the equivalent formulation of the Bloch--Kato conjecture in Lemma 18, that $H^i_{\mc{M}}(X, \mathbb{Z}(j))$ injects into $H^i_{\et}(X, \mathbb{Z}(j))$ for $i \le j + 2$. Here the latter group is \'etale motivic cohomology. Now use the finite generation of the Weil-\'etale cohomology of $X$ (proved in Theorem 67) and the long exact sequence of (4.39). Recalling that $H^i_{\mc{M}}(X, \Q(j)) \cong H^i_{\et}(X, \Q(j))$ (Theorem 14), one finds that $H^i_{\et}(X, \Z(j))$ can only fail to be finitely generated when $i = 2j + 2$. From this one concludes that $H^i_{\mc{M}}(X, \Z(j))$ is finitely generated whenever $i \le j + 2$ and $j > 0$. The lower bound on $j$ is removed through the straightforward observation that when $j \le 0$, one has 
        \begin{gather*}
            \CH^j(X, n)_\Z = \begin{cases}
                \Z & j = n = 0 \\
                0 & \textrm{ otherwise},
            \end{cases}
        \end{gather*}
        cf. \cite[page 268]{Bloch1986}.
    \end{proof}

    \subsection{The Beilinson--Bloch conjecture}\label{ssecBB}
    For $X$ a smooth projective variety over a global field $k$, the $\ell$-adic cohomology groups of $X$ have associated Hasse--Weil $L$-functions. When $\character(k) > 0$, these functions are rational by the Grothendieck--Lefschetz trace formula, and when $k$ is a number field, they are conjectured to have meromorphic continuation to $\mathbb{C}$.
    \begin{conj}[$\BB^i(X)$]\label{conjBB}
        For an integer $i$ with $0 \le i \le \dim X$,
        \begin{gather*}
            \dim_\Q \CH^i(X)_0 = \ord\limits_{s=i} L(H^{2i-1}(X_{\ol{k}}, \Q_\ell), s).
        \end{gather*}
    \end{conj}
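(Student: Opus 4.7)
The plan is to prove the conjecture not in its full generality---which is hopelessly open---but for the specific class of cubic threefolds promised in the introduction, following Jannsen's philosophy: transfer an explicit form of the Tate conjecture ($\TB$) over $\F_q$ to Beilinson--Bloch over the function field $\F_q(t)$.

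First, realize the smooth cubic threefold $X / \F_q(t)$ as the generic fiber of a Lefschetz pencil of cubic fourfolds. Start from a cubic fourfold $Y_0 / \F_q$ for which $\TB(Y_0)$ is known---the Fermat cubic fourfold in infinitely many characteristics qualifies, by combining \cite{Jannsen2007} with the known Tate conjecture there---and let $\mathcal{Y} \to \mathbb{P}^1_{\F_q}$ be the total space of a generic Lefschetz pencil through $Y_0$, realized as a blow-up along the codimension-two base locus. The generic fiber is the desired $X$.

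Second, establish $\TB(\mathcal{Y})$ and $\TB(Y_\lambda)$ for every closed point $\lambda \in \mathbb{P}^1_{\F_q}$. Smooth fibers are cubic threefolds over finite extensions of $\F_q$, handled by Corollary~\ref{corTBCubic}. The finitely many singular fibers (generically nodal cubic threefolds) are treated via the excision sequence of Proposition~\ref{propTBCases}(i): stratify into smooth pieces, and apply the unirationality argument of Corollary~\ref{corTBCors}(ii) dimension-by-dimension. For the four-dimensional total space $\mathcal{Y}$, combine its birational relation to $Y_0$ with Vial's results on motives of unirational varieties to conclude that $\mathcal{Y}$ has a motive of abelian type; then $\TB(Y_0)$ propagates to $\TB(\mathcal{Y})$ through Corollary~\ref{corTBCors}(i), with excision (Proposition~\ref{propTBCases}(i)) accounting for the exceptional loci of the blow-up.

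Third, feed these $\TB$ results into the Jannsen machinery of \cite{Jannsen2007} and \cite{Broe2025} to obtain a complete description of the motivic cohomology of the generic fiber $X$ via regulator maps to $\ell$-adic cohomology, and then translate the ranks of $\CH^i(X)_0$ into orders of vanishing of Hasse--Weil $L$-functions through the Grothendieck--Lefschetz trace formula applied to the family $\mathcal{Y} \to \mathbb{P}^1_{\F_q}$. The main obstacle is the second step: the Tate conjecture is largely open in dimension $\ge 4$, so $\TB(\mathcal{Y})$ must be extracted from the very special geometry of the pencil rather than established by general principles, and the singular fibers must be mild enough (nodal, say) for excision to reduce them to smooth strata on which Koll\'ar's unirationality and Vial's motive-of-abelian-type structure theorem are available. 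Ensuring these geometric conditions hold for a sufficiently rich supply of starting fourfolds $Y_0$, and making the motive-of-abelian-type argument tight for $\mathcal{Y}$ itself, is where the real work lies.
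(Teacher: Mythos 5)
The statement labeled \texttt{conjBB} is stated in the paper as a \emph{conjecture}, with no proof offered for it in general; the paper only verifies it (and much more) for a specific family of cubic threefolds over $\F_q(t)$, via Lemma \ref{lemTbLefschetz} and Theorem \ref{thmBBM}. You correctly interpret the task that way, and your three-step outline matches the paper's overall strategy: start from a cubic fourfold $X$ with $\TB(X)$ known, build a Lefschetz pencil $f \colon Y \to \mathbb{P}^1$ whose generic fiber is the desired threefold, establish $\TB$ for $Y$ and for every closed fiber, and then invoke the machinery of \cite{Jannsen2007} and \cite{Broe2025} to descend to the generic fiber.

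The one genuine gap is your treatment of the fourfold total space $\mathcal{Y}$ (the paper's $Y$). You claim that the birational relation to $Y_0$ together with ``Vial's results on motives of unirational varieties'' shows $\mathcal{Y}$ has motive of abelian type, and then you apply Corollary \ref{corTBCors}(i). But the Vial-based argument encoded in Corollary \ref{corTBCors}(ii) is limited to $\dim \le 3$; unirationality of a fourfold does not yield a motive of abelian type, and the paper never asserts $Y$ (or even $X$) has one. The Fermat fourfold does have abelian type motive, but by Katsura--Shioda, not via unirationality, and the paper is deliberately set up for an arbitrary cubic fourfold $X$ with $\TB(X)$. What the paper actually does is handle $Y$ entirely with the two-out-of-three property of Proposition \ref{propTBCases}(i): $Y \setminus E \cong X \setminus Z$ where $E$ is the exceptional divisor over the base locus $Z$, so $\TB(Y)$ follows from $\TB(X)$ (assumed), $\TB(Z)$ (smooth cubic surface, Corollary \ref{corTBCubic}), and $\TB(E)$ ($E$ is a $\mathbb{P}^1$-bundle over $Z$, hence unirational, hence covered by Corollary \ref{corTBCors}(ii) since $\dim E = 3$). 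A secondary imprecision is your plan to ``stratify \[a singular fiber\] into smooth pieces and apply unirationality dimension-by-dimension'': the smooth locus of a nodal cubic threefold is not projective, so Corollary \ref{corTBCors} does not apply to it directly. The paper instead blows up the node to get a smooth projective threefold $B$ birational to the fiber, applies Koll\'ar and Vial to $B$ and to the exceptional quadric $D$, and then passes back down by excision.
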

    This conjecture was independently formulated over number fields by Beilinson \cite[Conjecture 5.0]{Beilinson1987} and Bloch \cite[page 94]{Bloch1984} (and both attributed it to Swinnerton-Dyer, who seems not to have published it). We write $\BB^i(X)$ for the truth value of the conjecture for a given $X$ and $i$. Sometimes included along with conjecture \ref{conjBB} are the statements that the Chow group with integer coefficients $\CH^i(X)_\Z$ is finitely generated (the \enquote{motivic Bass conjecture})\footnote{Note that Kahn expresses some doubt about the analogous assertion over finite fields in \cite[Section 4.9.1]{Kahn2005}.}, and that the Abel--Jacobi map
    \begin{gather*}
        \CH^i(X)_0 \otimes_{\Q} \Q_\ell \to H^1_{\cont}(G_k, H^{2i-1}(X_{\ol{k}}, \Q_\ell(i)))
    \end{gather*}
    is injective \cite[Conjecture 9.15]{Jannsen1990}. Here $H^1_{\cont}$ denotes continuous group cohomology. 
    \par
    Little is known in general about the Beilinson--Bloch conjecture, and we will not attempt more than a very limited recollection of existing results (see \cite[Remark 1.3]{LTXZZ2025} for some additional background). When $k = \Q$ or when $\character(k) > 0$, the conjecture is known for elliptic curves over $k$ of analytic rank at most one: this claim over $\Q$ follows from \cite{GZ1986}, \cite{Koly1988}, \cite{Wiles1995}, and \cite{BCDT01}, and over function fields from \cite[Theorems 12.1.2 and 12.3.1]{Qiu2022}. When $k$ is a number field and $i > 1$, partial evidence has come from the study of Ceresa cycles (as in \cite{Bloch1984}). The conjecture $\TB$, taken for all varieties over finite fields, is known to imply the Beilinson--Bloch conjecture in positive characteristic (\cite[Chapter 12]{Jannsen1990}, \cite[Corollary 4.15]{Broe2025}), and one can use this approach to prove the conjecture unconditionally in special cases (e.g. in the cases mentioned in the introduction, and in \cite[Theorem 3.9.1]{Ulmer2011}). In particular, we have the following criterion, which will be our main tool in proving cases of the Beilinson--Bloch conjecture later in the paper.
    \begin{prop}[{\cite[Remark 4.14 and Corollary 4.15]{Broe2025}}]\label{propBBCrit}
        Let $f: X \to C$ be a flat, projective morphism to a smooth, projective, geometrically connected curve $C$ over a finite field $\F_q$. Assume that the generic fiber $X_{k(C)}$ is a smooth variety over the function field of $C$. Let $Z$ be a finite nonempty set of closed points of $C$, and $U = C - Z$, such that $f$ is smooth over $U$. Let $d = \dim X$ and let $0 \le j \le d - 1$. If the cycle class map
        \begin{gather*}
            \CH_{d-j}(X_Z) \to H_{2d-2j}((X_{Z})_{\ol{\F}_q}, \Q_\ell(d-j))
        \end{gather*}
        surjects onto the Galois-invariant subspace, its target $H_{2d-2j}((X_{Z})_{\ol{\F}_q}, \Q_\ell(d-j))$ is 1-semisimple, and $\TB^{2j, j}(X)$ holds, then $\BB^j(X_\eta)$ holds, and the Abel--Jacobi map
        \begin{gather}\label{eqAJ}
            \CH^j(X_\eta)_0 \otimes_\Q \Q_\ell \to H^1_{\cont}(G_{k(C)}, H^{2j-1}(X_{\ol{k(C)}}, \Q_\ell(j)))
        \end{gather}
        is bijective. 
    \end{prop}
    We remark that it would follow from the Beilinson--Bloch conjecture that the Albanese kernel of any smooth, projective, geometrically connected variety over a global field is torsion \cite[Lemma 5.1]{Beilinson1987}. 
    \par
    The Beilinson--Bloch conjecture, in the form stated here, is closely related to conjectures of Beilinson, Bloch and Murre describing filtrations on the Chow groups of smooth projective varieties over arbitrary fields. We refer to Jannsen's article \cite{Jannsen1994} for a wonderful overview of some of the ideas behind and connections between the conjectures.

    \section{Monodromy of Lefschetz pencils}
    Here we compile some needed results from \cite{SGA7-II} and \cite{Deligne1980} on the monodromy of Lefschetz pencils, and use them to compute the Hasse--Weil $L$-function arising from the middle cohomology of a Lefschetz pencil over a finite field. This will later shed light on the arithmetic of the intermediate Jacobians of certain cubic threefolds (proposition \ref{propArithA}). The content of this section is applied primarily in section 7.
    \par
    Let $k$ be a field and $X$ a smooth, projective, geometrically connected variety over $k$ of dimension $d \ge 1$. We fix a closed immersion $X \subset \mathbb{P}^r_k$\footnote{In concrete examples, the specific closed immersion we consider will typically be clear from context.}. Let $\check{\mathbb{P}}^r_k$ denote the dual projective space parametrizing hyperplanes in $\mathbb{P}^r_k$, and let $\Gr(1, \check{\mathbb{P}}^r_k)$ denote its Grassmannian of lines. A line $h \in \Gr(1,\check{\mathbb{P}}^r)(k)$ has an associated axis $A(h) = \bigcap\limits_{H \in h} H$, which is a codimension-2 linear subspace in $\mathbb{P}^r_k$.
    \begin{defn}[{\cite[\hphantom{}XVII.2.2]{SGA7-II}}]
        A line $h \in \Gr(1, \check{\mathbb{P}}^r)(\ol{k})$ is called a \textbf{Lefschetz pencil} for $X_{\ol{k}}$ if $h$ satisfies the following conditions:
        \begin{enumerate}[(i)]
            \item the axis of $h$ intersects $X_{\ol{k}}$ transversely;
            \item there exists a dense open $U \subseteq \mathbb{P}^1_{\ol{k}}$ such that for all $t \in U(\ol{k})$, the hyperplane $h(t)$ intersects $X_{\ol{k}}$ transversely;
            \item for all $t \in \mathbb{P}^1(\ol{k}) - U(\ol{k})$, $h(t)$ intersects $X_{\ol{k}}$ transversely except at a single point, which is an ordinary double point singularity of $X_{\ol{k}} \cap h(t)$ \cite[D\'efinition XV.1.2.1]{SGA7-II}. We also refer to such a singularity as a node.
        \end{enumerate}
        More broadly, we say that $h \in \Gr(1, \check{\mathbb{P}}^r)(k)$ is a Lefschetz pencil for $X$ if it is a Lefschetz pencil for $X_{\ol{k}}$ in the above sense. We say that $X_{\ol{k}} \subset \mathbb{P}^{r}_{\ol{k}}$ is a \textbf{Lefschetz embedding} if $\Gr(1, \check{\mathbb{P}}^{r}_{\ol{k}})$ contains an open dense subvariety whose closed points consist of all Lefschetz pencils for $X_{\ol{k}}$.
    \end{defn}
    \begin{rem}
        An ordinary double point singularity of a variety $Z$ over $k$ is non-degenerate in the sense of \cite[D\'efinition XV.1.2.2]{SGA7-II} iff either $\dim Z$ is odd, or $\character(k) \neq 2$. Several of the results we will state have similar hypotheses on dimension and characteristic, which are partly explained by the need to address the edge case of degenerate ordinary double points.
    \end{rem}
    We proceed to recall a criterion for the existence of Lefschetz pencils (over $\ol{k}$). Let $N$ be the normal bundle of $X$ in the ambient projective space. Then the associated projective bundle\footnote{Note that \cite[Expos\'e XVII]{SGA7-II} uses the dual convention for projective bundles, i.e. for a vector bundle $V$, it defines $\mathbb{P}(V)$ to be the bundle of lines in $V^\vee$. We use the convention of \cite[Tag 01OA]{stacks-project}.} $\mathbb{P}(N)$ embeds into $X \times \check{\mathbb{P}}^r_k$, and its image consists of pairs $(x, H)$, with $H$ a hyperplane tangent to $X$ at $x$. The image of the projection $\phi: \mathbb{P}(N) \to \check{\mathbb{P}}^r_k$ is called the \textbf{dual variety} of $X$, denoted by $\check{X}$ \cite[\hphantom{}XVII.3.1.3]{SGA7-II}.
    \begin{lem}[{\cite[Corollaire XVII.3.5.0]{SGA7-II}}]\label{lemLefschetzCrit}
        Assume that $d$ is even, or that $\character(k) \neq 2$. Then $X_{\ol{k}} \subset \mathbb{P}^{r}_{\ol{k}}$ is a Lefschetz embedding if and only if either $\dim \check{X} \le r - 2$, or the extension of function fields $k(\mathbb{P}(N)) / k(\check{X})$ is finite separable.
    \end{lem}
    \begin{cor}
        Assume that $d$ is even, or that $\character(k) \neq 2$. If $X$ is a smooth hypersurface of degree $e$ in $\mathbb{P}^r_k$, and $\character(k) \nmid e(e-1)$, then $X_{\ol{k}} \subset \mathbb{P}^r_{\ol{k}}$ is a Lefschetz embedding.
    \end{cor}
    \begin{proof}
        Since $X$ is a smooth hypersurface, the normal bundle $N$ of $X$ has rank one, and $\mathbb{P}(N) \cong X$. By lemma \ref{lemLefschetzCrit}, we may assume that $\dim \check{X} = r-1 = \dim X$. Then the extension of function fields $k(X) / k(\check{X})$ induced by $\phi$ is finite. Again by the lemma, it suffices to show that $p = \character(k)$ does not divide the degree $[k(X): k(\check{X})]$, which we denote by $\deg(\phi)$. 
        \par
        Since $\dim \check{X} = r-1$, $\check{X}$ is a hypersurface in $\check{\mathbb{P}}^r_k$. Katz in \cite[Formula (XVII.5.2.8)]{SGA7-II} gives the following formula for its degree: we have
        \begin{gather*}
            \deg(\check{X}) \deg(\phi) = \deg ((\xi - [H])^{r-1}), 
        \end{gather*}
        where $\xi$ is the divisor of the normal bundle $N$ on $X$, $H$ is a hyperplane section of $X$, and $\deg ((\xi - [H])^{r-1})$ denotes the degree of a zero-cycle on $X$. Because $X$ is a smooth hypersurface of degree $e$ in $\mathbb{P}^r_k$, we have $\xi = e[H]$, and 
        \begin{gather*}
            \deg(\check{X}) \deg(\phi) = \deg(((e-1)[H])^{r-1}) = (e-1)^{r-1} \deg([H]^{r-1}) = (e-1)^{r-1} e.
        \end{gather*}
        By our hypothesis that $p \nmid e (e-1)$, we see that $p \nmid \deg(\phi)$, as desired.
    \end{proof}
    Fix a Lefschetz pencil $h \in \Gr(1, \check{\mathbb{P}}^r)(k)$ for $X$, and let $\Delta = \Delta(h) = X \cap A(h)$ be the base locus of the pencil. The blowup $Y$ of $X$ along $\Delta$ is smooth, and admits a morphism $f: Y \to \mathbb{P}^1$ such that $f^{-1}(t) \cong X_{\ol{k}} \cap h(t)$ for all $t \in \mathbb{P}^1(\ol{k})$ \cite[XVIII.3.1]{SGA7-II}. We will slightly abuse terminology and also refer to $f$ as a Lefschetz pencil for $X$. Let $\eta$ be the generic point of $\mathbb{P}^1_k$, and $\ol{\eta} \to \eta$ a geometric generic point. Let $G_\eta = \Gal(\ol{\eta} / \eta)$, and let $V = H^{d-1}(Y_{\ol{\eta}}, \Q_\ell)$.
    \begin{prop}\label{propLeray}
        Assume that one of the following conditions holds:
        \begin{enumerate}[(i)]
            \item $d$ is odd;
            \item $d$ is even, and the pullback map $H^{d-1}(X_{\ol{k}}, \Q_\ell) \to V$ is not surjective.
        \end{enumerate}
        Then the Leray spectral sequence
        \begin{gather*}
            E_2^{i, j}: H^i(\mathbb{P}^1_{\ol{k}}, R^j f_{\ol{k}, *} \Q_\ell) \implies H^{i+j}(Y_{\ol{k}}, \Q_\ell)
        \end{gather*}
        degenerates at $E_2$, and we have canonical isomorphisms
        \begin{align*}
            E_2^{0, d-1} &\cong H^{d-1}(X_{\ol{k}}, \Q_\ell),\\
            E_2^{2, d-1} &\cong H^{d-1}(X_{\ol{k}}, \Q_\ell(-1)).
        \end{align*}
        compatible with the $G_k$-action. For $j \neq d-1$, we have canonical isomorphisms
        \begin{align*}
            E_2^{0, j} &\cong H^{j}(Y_{\ol{\eta}}, \Q_\ell),\\
            E_2^{2, j} &\cong H^{j}(Y_{\ol{\eta}}, \Q_\ell(-1)),\\
            E_2^{i, j} &= 0 \textrm{ for } i \notin \{0, 2\},
        \end{align*}
        compatible with the $G_\eta$-action through the quotient $G_\eta \to G_k$.
    \end{prop}
    \begin{proof}
        This follows from \cite[Th\'eor\`emes XVIII.5.6 and XVIII.6.3.3]{SGA7-II} and the hard Lefschetz theorem \cite[Th\'eor\`eme 4.1.1]{Deligne1980}.
    \end{proof}
    We will need some facts about vanishing cycles, which we now review. The reader is advised that this requires a bit of care with basepoints. Let $m = \left \lfloor \frac{d-1}{2} \right \rfloor$. Recall that for two geometric points $\ol{a}$, $\ol{b}$ of a scheme $Z$, a \textbf{path} in $Z$ from $\ol{a}$ to $\ol{b}$ is an isomorphism between the associated set-valued fiber functors on the category of finite \'etale covers of $Z$. Let $S$ be the set of points $\ol{x} \in \mathbb{P}^1_{\ol{k}}$ such that the fiber $Y_{\ol{x}}$ is singular, and $U = \mathbb{P}^1_{\ol{k}} - S$. For each $\ol{x} \in S$, fix a geometric generic point $\ol{\eta}_{\ol{x}}$ of the henselization of $\mathbb{P}^1_{\ol{k}}$ at $\ol{x}$. For each path $c$ in $U$ from $\ol{\eta}$ to $\ol{\eta}_{\ol{x}}$, there is an associated vanishing cycle $\delta_c \in V(m)$, well-defined up to sign. We define 
    \begin{gather*}
        E_{\ol{x}} = \{\pm \delta_c: c \textrm{ a path from } \ol{\eta} \textrm{ to } \ol{\eta}_{\ol{x}}\} \subseteq V(m),
    \end{gather*}
    following \cite[D\'efinition 4.2.4]{Deligne1980}. Let
    \begin{gather*}
        C = \bigcup_{\ol{x} \in S} E_{\ol{x}} \subseteq V(m),
    \end{gather*}
    and let $\Ev(Y_{\ol{\eta}}) \subseteq V$ be the subspace spanned by $\{\lambda \delta: \lambda \in \Q_\ell(-m), \delta \in C\}$.
    \begin{prop}[{\cite[Corollaire 4.3.9]{Deligne1980}}]\label{propVanSpan}
        The pullback map $H^{d-1}(X_{\ol{k}}, \Q_\ell) \to V$ induces a direct sum decomposition
        \begin{gather*}
            V \cong H^{d-1}(X_{\ol{k}}, \Q_\ell) \oplus \Ev(Y_{\ol{\eta}}).
        \end{gather*}
    \end{prop}
    Let $G_\infty$ be the Galois group of $\ol{k(\eta)}$ over $\ol{k}(\eta)$, and note that the $G_\infty$-action on $V$ factors through the map $G_\infty \to \pi_1(U, \ol{\eta})$, which is surjective by \cite[Tag 0BQI]{stacks-project}. Let $\rho: G_\infty \to \mc{G}$ denote the $G_\infty$-representation on $\Ev(Y_{\ol{\eta}})$. This lands in the group $\mc{G}$ of automorphisms of $\Ev(Y_{\ol{\eta}})$ which preserve the nondegenerate cup product pairing ${\Ev(Y_{\ol{\eta}}) \otimes_{\Q_\ell} \Ev(Y_{\ol{\eta}})} \to \Q_\ell(-(d-1))$.
    \begin{prop}[{\cite[Th\'eor\`eme 4.4.1]{Deligne1980}}]\label{propOpenImage}
        If $d$ is even, the image $\rho(G_\infty)$ is open in $\mc{G}$.
    \end{prop}
    \begin{rem}
        When $d$ is even, the cup product pairing on $\Ev(Y_{\ol{\eta}})$ is alternating, and $\mc{G}$ is the associated symplectic group (which is nonabelian if $\Ev(Y_{\ol{\eta}})$ is nonzero).
    \end{rem}
    \begin{prop}[{\cite[Corollaire 4.2.8]{Deligne1980}}]\label{propVanTran}
        Suppose that $d$ is even, or that $d$ is odd and $\character(k) \neq 2$. Then for all $\delta, \delta' \in C$, there exists $g \in G_\infty$ such that $g(\delta) \in \{\delta', -\delta'\}$. In particular, if one vanishing cycle is nonzero, then they all are.
    \end{prop}
    \par
    Fix $\ol{x} \in S$, and a path $c$ from $\ol{\eta}$ to $\ol{\eta}_{\ol{x}}$. Let $I_{c} \subset G_{\eta}$ be the associated inertia group. Recall the specialization map $\spe: H^{d-1}(Y_{\ol{x}}, \Q_\ell) \to H^{d-1}(Y_{\ol{\eta}_{\ol{x}}}, \Q_\ell)$ \cite[Tag 0GJ2]{stacks-project}, whose target is identified via $c$ with $V$. 
    \begin{prop}\label{propInertia}
        The specialization map is injective, and its image in $V$ is equal to $V^{I_{c}}$. This in turn is equal to the orthogonal complement of the vanishing cycle $\delta_{c}$ with respect to the cup product $V \otimes_{\Q_\ell} V(m) \to \Q_\ell(-(d - m - 1))$. If $d$ is even or $\character(k) \neq 2$, the action of $I_c$ on $V$ factors through the tame quotient of $I_c$.
    \end{prop}
    \begin{proof}
        The claims are immediate from \cite[Th\'eor\`eme XV.3.4]{SGA7-II}.
    \end{proof}

    Suppose now that $k = \F_q$ is finite. Under mild hypotheses, the above results permit us to explicitly compute the Hasse--Weil $L$-function of the $G_{\eta}$-representation $V$ in terms of the zeta function of $Y$. For a Galois representation $W$ of a finite field $\F_{a}$ with $a$ elements, let
    \begin{gather*}
        \chi(W, s) = \det(1-\Frob_a a^{-s} | W),
    \end{gather*}
    where $\Frob_a \in G_{\F_a}$ is the geometric Frobenius. Let $(\mathbb{P}^1_{k})^0$ denote the set of closed points of $\mathbb{P}^1_k$. For each $x \in (\mathbb{P}^1_{k})^0$, fix a choice of inertia and decomposition groups $I_{x} \subset G_{x} \subset G_{\eta}$, and regard $V^{I_x}$ as a $G_{k(x)}$-representation in the natural way. Let
    \begin{gather*}
        L(V, s) = \prod\limits_{x \in (\mathbb{P}^1_{\F_q})^0} \chi(V^{I_{x}}, s)^{-1}
    \end{gather*}
    be the Hasse--Weil $L$-function of $V$, and
    \begin{gather*}
        \Lambda(s) = \prod\limits_{x \in (\mathbb{P}^1_{\F_q})^0} \chi(H^{d-1}(Y_{\ol{x}}, \Q_\ell), s)^{-1}
    \end{gather*}
    be the $L$-function of the sheaf $R^{d-1} f_{*} \Q_\ell$ on $\mathbb{P}^1_k$. 

    \begin{prop}\label{propLefschetzL}
        For all $x \in (\mathbb{P}^1_k)^0$, we have
        \begin{gather*}
            \chi(V^{I_{x}}, s) = \chi(H^{d-1}(Y_{\ol{x}}, \Q_\ell), s),
        \end{gather*}
        so that $L(V, s) = \Lambda(s)$. If the hypotheses of proposition \ref{propLeray} are satisfied, then
        \begin{gather*}
            L(V, s) = \frac{\chi(H^{d}(Y_{\ol{k}}, \Q_\ell), s)}{\chi(H^{d-1}(X_{\ol{k}}, \Q_\ell), s) \ \chi(H^{d-1}(X_{\ol{k}}, \Q_\ell), s - 1) \ \chi(H^d(Y_{\ol{\eta}}, \Q_\ell), s) \ \chi(H^{d-2}(Y_{\ol{\eta}}, \Q_\ell), s - 1)}.
        \end{gather*}
    \end{prop}
    \begin{proof}
        For any $x \in (\mathbb{P}^1_k)^0$, proposition \ref{propInertia} gives a $G_{k(x)}$-equivariant identification
        \begin{gather*}
            H^{d-1}(Y_{\ol{x}}, \mathbb{Q}_\ell) \cong H^{d-1}(Y_{\ol{\eta}}, \Q_\ell)^{I_{x}},
        \end{gather*}
        so that the Euler factors of $L(V, s)$ and $\Lambda(s)$ at each closed point agree. By the Grothendieck--Lefschetz trace formula \cite[Theorem VI.13.3]{Milne1980},
        \begin{gather}\label{eqEulerProd}
            \Lambda(s) = \frac{\chi(H^1(\mathbb{P}^1_{\ol{k}}, R^{d-1} f_{\ol{k}, *} \Q_\ell), s)}{\chi(H^0(\mathbb{P}^1_{\ol{k}}, R^{d-1} f_{\ol{k}, *} \Q_\ell), s)\chi(H^2(\mathbb{P}^1_{\ol{k}}, R^{d-1} f_{\ol{k}, *} \Q_\ell), s)}.
        \end{gather}
        Now assume that the hypotheses of proposition \ref{propLeray} are satisfied, so that the Leray spectral sequence discussed there degenerates at $E_2$. We then have a direct sum decomposition
        \begin{gather*}
            H^d(Y_{\ol{k}}, \Q_\ell) \cong H^0(\mathbb{P}^1_{\ol{k}}, R^d f_{\ol{k}, *} \Q_\ell) \oplus H^1(\mathbb{P}^1_{\ol{k}}, R^{d-1} f_{\ol{k}, *} \Q_\ell) \oplus H^2(\mathbb{P}^1_{\ol{k}}, R^{d-2} f_{\ol{k}, *} \Q_\ell),
        \end{gather*}
        cf. \cite[Proposition 1.2]{Deligne1968}. The expressions for the $E_2$-terms given in proposition \ref{propLeray}, together with \eqref{eqEulerProd}, complete the proof.
    \end{proof}

    \section{Lefschetz pencils of some special varieties}\label{secLefschetzSpecial}

    Let $k = \F_q$, and again let $X \subset \mathbb{P}^r_k$ be a smooth, projective, geometrically connected variety of dimension $d$. Fix a Lefschetz pencil $h$ for $X$ which is defined over $k$. Again let $\Delta$ be the base locus of $h$, let $f: Y \to \mathbb{P}^1_k$ be the family of hyperplane sections of $X$ associated with $h$, and let $Y_\eta$ be the generic fiber of $f$. Our goal is to use the criterion of proposition \ref{propBBCrit} to prove cases of the Beilinson--Bloch conjecture for $Y_\eta$. This requires understanding the Chow groups of $Y$, and those of the singular closed fibers of $f$. We will give two similar arguments along these lines, first when $X$ is a cubic fourfold, then when $X$ is a more general cubic hypersurface. When $X$ is a threefold with $\CH^2(X)$ one-dimensional, we prove that the group of zero-cycles of degree zero of $Y_\eta$ is torsion through more elementary means. 
    \par
    Before treating more concrete situations, we include the following general sufficient condition for the Beilinson--Bloch conjecture for $Y_\eta$ in high codimension.

    \begin{prop}\label{propBBHighCodim}
        Fix $j$ with $2j > d + 1$. Suppose that $\TB^{2j, j}(X)$ and $\TB^{2(j-1), j-1}(\Delta)$ hold. Also assume that 
        \begin{gather*}
            \dim_{\Q_\ell} H^{2j}(X_{\ol{k}}, \Q_\ell(j))^{\Frob_{q}^r \sim 1} = 1
        \end{gather*}
        for all $r > 0$, where the superscript $\Frob_q^r \sim 1$ denotes the 1-generalized eigenspace of $\Frob_q^r$. Then $\BB^{j}(Y_\eta)$ holds. 
    \end{prop}
    \begin{proof}
        By the blowup formula for motives (proposition \ref{propBlowup}), $\TB^{2j, j}(X)$ and $\TB^{2(j-1), j-1}(\Delta)$ together are equivalent to $\TB^{2j, j}(Y)$. Let $Z \subset \mathbb{P}^1_k$ be a finite nonempty set of closed points, with $U = \mathbb{P}^1_k - Z$, such that $f_U$ is smooth. Using proposition \ref{propBBCrit}, it suffices to show that the cycle class map
        \begin{gather*}
            \CH_{d-j}(Y_Z) \to H_{2d-2j}((Y_{Z})_{\ol{k}}, \Q_\ell(d-j))^{\Frob_q \sim 1}
        \end{gather*}
        is surjective. The argument of proposition \ref{propTBCases} shows that it even suffices to prove the analogous statement after passing to a finite extension $L = \F_{q^r}$, i.e. that  
        \begin{gather*}
            \CH_{d-j}((Y_Z)_L) \to H_{2d-2j}((Y_{Z})_{\ol{k}}, \Q_\ell(d-j))^{\Frob_q^r \sim 1}
        \end{gather*}
        is surjective. Fix such an $L$ so that the points of $Z$ are $L$-rational. Then $(Y_Z)_L$ is a finite disjoint union of hyperplane sections of $X_L$, and the desired surjectivity follows from the weak Lefschetz theorem (proposition \ref{propWeakLefschetz}). 
    \end{proof}
    \begin{rem}
        For example, from proposition \ref{propAlbKerFin} and corollary \ref{corTBCors}, one sees that the hypotheses of proposition \ref{propBBHighCodim} are satisfied when $d \ge 4$, $j = d-1$, and $X$ is a Fermat hypersurface which satisfies the Tate conjecture. 
    \end{rem}
    Now we turn our focus to smooth cubic hypersurfaces. The following lemma characterizes the singular fibers of their Lefschetz pencils.
    \begin{lem}[{\cite[Corollary 1.5.16]{Huybrechts2023}}]\label{lemNodalCubic}
        If $Z$ is a cubic hypersurface over a field $L$ such that $Z$ is smooth away from a single $L$-rational ordinary double point $z$, then the blowup of $Z$ at $z$ is isomorphic to the blowup of $\mathbb{P}^{\dim Z}_L$ along a smooth complete intersection of multidegree (3, 2). 
    \end{lem}
    With this, we can prove the below theorem, which implies theorem \ref{thmBBCubicThreefoldIntro}.
    \begin{thm}\label{thmBBCubicThreefold}
        Assume that $X$ is a smooth cubic fourfold.
        \begin{enumerate}[(i)]
            \item The conjecture $\TB$ holds for all closed fibers of $f$.
            \item For all $j$, $\TB^{2j, j}(Y)$ holds, and the Beilinson--Bloch conjecture holds for codimension-2 cycles on $Y_\eta$. If $\TB(X)$ holds, then $\TB(Y)$ does as well.
        \end{enumerate}
    \end{thm}
    \begin{proof}
        Let $F$ be a fiber of $f$ over a closed point $t_0 \in \mathbb{P}^1_k$, $T$ the regular locus of $F$, and $\Sing(F) = F - T$ the singular locus. As $\Sing(F)$ is either empty or zero-dimensional, $\TB(\Sing(F))$ holds. Thus, by the \enquote{two-out-of-three} part of proposition \ref{propTBCases}, $\TB(T)$ is equivalent to $\TB(F)$. By proposition \ref{propTBCases}(ii), to prove $\TB(T)$, it suffices to prove $\TB(T_M)$ for some finite extension $M / \F_q$. To prove $\TB(F)$, we therefore can and do assume that $t_0$ is $\F_q$-rational, so $F$ is a cubic threefold over $\F_q$. If $F$ is smooth, then $\TB(F)$ holds by corollary \ref{corTBCubic}, so we are left to consider the case where $\Sing(F)$ consists of one $k$-rational ordinary double point.
        \par
        Let $\alpha: B \to F$ be the blowup of $F$ along its singular point. By lemma \ref{lemNodalCubic}, $B$ is rational, so $\TB(B)$ holds by corollary \ref{corTBCors}. The exceptional divisor $D$ of $\alpha$ is a smooth quadric surface, so $\TB(D)$ holds by corollary \ref{corTBQuadric}. Thus $\TB(F)$ holds by the two-out-of-three property.
        \par
        It remains to prove (ii). We noted in proposition \ref{propTBK3AndCubic} that $\TB^{2j, j}(X)$ holds for all $j$. Since $Y$ is the blowup of $X$ along a smooth cubic surface $\Delta$, and $\TB(\Delta)$ holds (corollary \ref{corTBCubic}), $\TB^{2j, j}(Y)$ also holds for all $j$. Then $\BB^2(Y_\eta)$ follows from part (i) and proposition \ref{propBBCrit}. If $\TB(X)$ holds, we similarly infer $\TB(Y)$.
    \end{proof}

    \begin{thm}\label{thmBB3Cubic}
        Suppose that $X$ is a smooth cubic hypersurface of dimension $d \ge 5$. Then $\TB^{4, 2}$ holds for all closed fibers of $f$. If $\TB^{6, 3}(X)$ holds (e.g. $X$ is a Fermat cubic hypersurface satisfying the hypotheses of corollary \ref{corTBFermat}), then $\BB^3(Y_\eta)$ does as well.
    \end{thm}
    \begin{proof}
        Let $F$ be a closed fiber of $f$. We want to prove $\TB^{4, 2}(F) = \TB_{2d-6, d-3}(F)$. As in the proof of theorem \ref{thmBBCubicThreefold}, by applying proposition \ref{propHomologyExSeq} to the open-closed decomposition of $F$ into its singular and regular locus, and using proposition \ref{propTBCases}(ii), we reduce to the case where $F$ is a cubic hypersurface over $\F_q$. 
        \par
        If $F$ is smooth, then $\TB^{4, 2}(F)$ holds by proposition \ref{propTBK3AndCubic}. We may thus assume that $F$ is smooth away from a single $k$-rational ordinary double point. With $\alpha: B \to F$ the blowup of the singular point, we see from proposition \ref{propFd} and lemma \ref{lemNodalCubic}, and the blowup formula for motives, that $\TB^{4, 2}(B)$ holds. In detail, we have
        \begin{gather*}
            \mf{h}(B) \cong \mf{h}(\mathbb{P}^{d-1}) \oplus \mf{h}(S)(-1),
        \end{gather*}
        where $S \subset \mathbb{P}^{d-1}$ is a smooth complete intersection of type (3, 2). Thus $\TB^{2,1}(S)$ implies $\TB^{4,2}(B)$. When $d = 5$, it is well-known that $S$ is a K3 surface \cite[Example 1.1.3]{Huybrechts2016}, so that $\TB^{2,1}(S)$ follows from the known Tate conjecture for $S$ \cite[Theorem A.1]{KM2016} and proposition \ref{propTBCases}(iv). When $d > 5$, we observe from the Chow--K\"unneth decomposition of proposition \ref{propFd} that $\TB^{2, 1}(S)$ holds.
        \par
        The exceptional divisor $D$ of $\alpha$ is a smooth quadric hypersurface, so $\TB(D)$ holds by corollary \ref{corTBQuadric}. Proposition \ref{propHomologyExSeq} then implies $\TB^{4, 2}(B- D)$. Using the diagram of that proposition with respect to the open-closed decomposition $F = \Spec \F_q \sqcup (B-D)$, we see that $\TB^{4,2}(F)$ holds. 
        \par
        Now assume $\TB^{6,3}(X)$. The blowup formula then shows that $\TB^{6,3}(Y)$ is equivalent to $\TB^{4,2}(\Delta)$, which in turn follows from proposition \ref{propTBK3AndCubic}. From $\TB^{6,3}(Y)$ and the preceding discussion of closed fibers of $f$, proposition \ref{propBBCrit} yields $\BB^3(Y_\eta)$, as desired.
    \end{proof}

    \begin{rem}\label{remEasyBBCubic}
        For $S$ a smooth cubic hypersurface of dimension $\ge 2$ over a global field, there are some cases of the Beilinson--Bloch conjecture for $S$ which are immediate from previously known results and the Betti numbers of $S$. For instance, $\BB^1(S)$ follows from the triviality of the Picard variety of $S$. When $d > 3$, $\BB^2(S)$ follows from proposition \ref{propRatNumCH2}. Likewise, $\BB^{\dim S}(S)$ follows from remark \ref{remCHdim0Cubic}. Other cases of $\BB^i(S)$ are implied by proposition \ref{propGenChVanish}. In particular, if we take $X$ to be a cubic fivefold over $\F_q$ which satisfies the hypotheses of theorem \ref{thmBB3Cubic}, we find that rational and numerical equivalence agree on the Chow groups (with rational coefficients) of the corresponding cubic fourfold $Y_\eta$ over $\F_q(t)$. 
    \end{rem}

    \begin{prop}\label{propCh21d3Fold}
        Suppose that $\dim(X) = 3$, and that $\CH^2(X)$ is one-dimensional (e.g. $X$ is a smooth Fermat threefold). Then $\CH^2(Y_\eta)$ is one-dimensional. In particular, the group of zero-cycles of degree zero $\CH^2(Y_\eta)_{0, \Z}$ is torsion.
    \end{prop}
    \begin{proof}
        As $Y$ is the blowup of $X$ along a smooth curve, the blowup formula for motives shows that $\CH^2(Y)$ is two-dimensional. Let $L, M \in \CH^1(Y)$, such that $M$ is the class of a closed fiber of $f$, and $L$ is ample. Then $L \cdot M$ is numerically nontrivial in $Y$. Since $L \cdot M$ is in the kernel of the surjection $\CH^2(Y) \to \CH^2(Y_\eta)$, its target (which is nonzero) must be one-dimensional.
    \end{proof}

    \begin{prop}\label{propCh2FinGen}
        Assume that $\TB(X)$ holds. Then $\CH^2(Y_\eta)_\Z$ is finitely generated.
    \end{prop}
    \begin{proof}
        Proposition \ref{propMotCohFinGen} implies that $\CH^2(X)_\Z$ is finitely generated, and the blowup formula for motives then implies the same for $\CH^2(Y)_\Z$. The latter surjects onto $\CH^2(Y_\eta)_\Z$.
    \end{proof}

    \begin{cor}\label{corCH2FiniteEx}
        If $\TB(X)$ holds, and $X$ is either a smooth threefold with $\CH^2(X)$ one-dimensional, or a smooth cubic hypersurface of dimension $\neq 4$, then the homologically trivial subgroup of $\CH^2(Y_\eta)_\Z$ is finite.
    \end{cor}
    \begin{proof}
        This follows from propositions \ref{propCh21d3Fold} and \ref{propCh2FinGen}, and the cases of the Beilinson--Bloch conjecture discussed in remark \ref{remEasyBBCubic}.
    \end{proof}

    \begin{rem}
        The statement of proposition \ref{propMotCohFinGen} was previously used in \cite[Proof of Corollary 1]{Kahn2021} to prove finiteness of the Albanese kernel of some isotrivial surfaces over global function fields.
    \end{rem}

    \begin{rem}\label{remNonIsotrivial}
        Suppose that at least one of the following conditions holds:
        \begin{enumerate}
            \item $X$ is even-dimensional, and the pullback map $H^{d-1}(X_{\ol{k}}, \Q_\ell) \to H^{d-1}(Y_{\ol{\eta}}, \Q_\ell)$ is not surjective;
            \item $X$ is a Fermat hypersurface of degree $m$, where $p = \character(k) \nmid m$ and $m-1$ is not a power of $p$, and the Lefschetz pencil $h$ is \enquote{sufficiently generic} (i.e. it lies outside a certain closed subset of $\Gr(1, \check{\mathbb{P}}^n)$, which depends on $X$).
        \end{enumerate}
        Then $Y_\eta$ is not isotrivial. In the former case, this follows from propositions \ref{propVanSpan} and \ref{propOpenImage}. In the latter case, it follows from \cite[Example 9]{OV2007}. Taking $X$ to be the Fermat quartic threefold in corollary \ref{corCH2FiniteEx}, we thus find that for $h$ sufficiently generic, $Y_\eta$ is a non-isotrivial quartic K3 surface over $\F_q(t)$ whose group of zero-cycles of degree zero is finite. As remarked in \cite[Example 9.3]{Huybrechts2014}, it would be interesting to find an example of a non-isotrivial K3 surface $Z$ over $\ol{\F_q(t)}$ with $\CH^2(Z)_0$ trivial.
    \end{rem}

    \subsection{Motivic cohomology of $Y_\eta$}
    In this subsection, we assume that $X$ is a smooth cubic fourfold over $k = \F_q$ satisfying $\TB(X)$: for instance, we may take $X$ as in corollary \ref{corTBForCubicFour}. Using theorem \ref{thmBBCubicThreefold} and results of Jannsen, we can compute all motivic cohomology groups of $Y_\eta$ (with rational coefficients) in terms of certain simpler invariants, whose definitions we now recall.
    \par
    Fix a geometric point $\ol{\eta} \to \eta$, and let $K = k(\eta)$. For any smooth variety $W$ over $k(\eta)$ and $b,$ $c \in \Z$, define the arithmetic \'etale cohomology
        \begin{gather}\label{eqArEtale}
            H^b_{\ar}(W, \Q_\ell(c)) = \left[\varinjlim\limits_{U' \subseteq U} H^b(\widetilde{W}_{U'} \times_{\F_q} \ol{\F}_q, \Q_\ell(c))\right]^{G_{\F_q}}.
        \end{gather}
        Here $\widetilde{W}$ is a choice of smooth proper scheme over a nonempty open $U \subseteq \mathbb{P}^1_k$ such that $\widetilde{W} \times_{U} K \cong W$, and the colimit is over all nonempty open $U' \subseteq U$. As explained in \cite[Section 13.6]{Jannsen2007}, the arithmetic \'etale cohomology receives a regulator map
        \begin{gather*}
            H^b_{\mc{M}}(W, \Q(c)) \to H^b_{\ar}(W, \Q_\ell(c)).
        \end{gather*}
        For $a$, $b$, $c \in \Z$, the choice of $\widetilde{W}$ likewise equips $V = H^b(W_{\ol{K}}, \Q_\ell(c))$ with an action of the \'etale fundamental group $\pi_1(U, \ol{\eta})$. We define the arithmetic Galois cohomology of $V$ via
        \begin{gather}\label{eqArGalois}
            H^a_{\ar}(G_K, V) = \left[\varinjlim\limits_{U' \subseteq U} H^a_{\cont}(\pi_1(U'_{\ol{\F}_q}, \ol{\eta}), V)\right]^{G_{\F_q}}.
        \end{gather}
        The arithmetic \'etale and Galois cohomology groups, and the regulator map, are independent of the choice of $\widetilde{W}$.
        \par
        Since we assumed $\TB(X)$, theorem \ref{thmBBCubicThreefold} implies that $\TB$ holds for $Y$, and for all closed fibers of $f$. Now \cite[Theorem 13.6.3]{Jannsen2007} yields the below result.
        \begin{thm}\label{thmBBM}
        The following are true.
        \begin{enumerate}[(i)]
            \item The regulator maps induce isomorphisms
            \begin{gather*}
                H^i_{\mc{M}}(Y_\eta, \Q(j)) \otimes_\Q \Q_\ell \to H^i_{\ar}(Y_\eta, \Q_\ell(j))
            \end{gather*}
            for all $i, j \in \Z$.
            \item For $2j-i \notin \{0, 1\}$, we have\footnote{This corrects a small typo in the statement of \cite[Theorem 13.6.3]{Jannsen2007}, which says that $H^i_{\mc{M}}(Y_\eta, \Q(j)) = 0$ for $i - 2j \notin \{0, 1\}$, instead of for $2j - i \notin \{0, 1\}$. As a subquotient of $K_{2j-i}(Y_\eta) \otimes_\Z \Q$, $H^i_{\mc{M}}(Y_\eta, \Q(j))$ automatically vanishes when $2j - i < 0$, since $Y_\eta$ is regular. Similarly, formula \eqref{eqArithCohIso} corrects another small misprint in loc. cit., which, after accounting for the previous correction, originally says that $H^{2j-1}_{\ar}(Y_\eta, \Q_\ell(j)) \cong H^{1}_{\ar}(G_K, H^{2j-1}(Y_{\ol{K}}, \Q_\ell(j)))$.} $H^i_{\mc{M}}(Y_\eta, \Q(j)) = 0$. For $i = 2j - 1$, there is an isomorphism
            \begin{gather}\label{eqArithCohIso}
                H^{2j-1}_{\ar}(Y_\eta, \Q_\ell(j)) \cong H^{1}_{\ar}(G_K, H^{2j-2}(Y_{\ol{\eta}}, \Q_\ell(j))).
            \end{gather}
            \item For all $j$, the cycle class map
            \begin{gather*}
                \CH^j(Y_\eta)_{\hom} \otimes_\Q \Q_\ell \xrightarrow{\sim} H^{2j}(Y_{\ol{\eta}}, \Q_\ell(j))^{G_K}
            \end{gather*}
            and Abel--Jacobi map
            \begin{gather*}
                \CH^j(Y_\eta)_0 \otimes_\Q \Q_\ell \xrightarrow{\sim} H^1_{\cont}(G_K, H^{2j-1}(Y_{\ol{\eta}}, \Q_\ell(j)))
            \end{gather*}
            are both isomorphisms.
        \end{enumerate}
        In particular, Jannsen's conjecture \cite[Conjecture 12.18]{Jannsen1990} holds for $Y_\eta$.
    \end{thm}
    Note that claim (iii) of the theorem also follows from propositions \ref{propFd} and \ref{propBBCrit}. We remark that Murre's conjecture \cite[\hphantom{} 1.4]{Murre1993}, which \cite[Theorem 13.6.3]{Jannsen2007} also implies for $Y_\eta$, was previously known for cubic threefolds \cite[Proof of Theorem 4.4]{dAMS98}.

    \section{The intermediate Jacobian}\label{secBSD}
    Let $X$ be a smooth cubic fourfold over $k = \F_q$, and $f: Y \to \mathbb{P}^1_k$ a Lefschetz pencil for $X$, with generic fiber $Y_\eta$. Then $Y_\eta$ is a smooth cubic threefold over $K = \F_q(t)$. Let $A$ be the Albanese variety of the Fano surface of $Y_\eta$, and fix a Chow--K\"unneth decomposition of $Y_\eta$ as in proposition \ref{propFd}. Passing to cohomology and Chow groups in proposition \ref{propH3Cubic} yields
    \begin{gather*}
        H^{3}(Y_{\ol{\eta}}, \Q_\ell(2)) \cong H^1(A_{\ol{\eta}}, \Q_\ell(1))
    \end{gather*}
    and
    \begin{gather*}
        \CH^2(Y_\eta)_0 = \CH^2(\mf{h}^3(Y_\eta)) = \CH^1(A)_0 \cong A(K) \otimes_\Z \Q,
    \end{gather*}
    where the last isomorphism is induced by a choice of polarization of $A$. It immediately follows that the Beilinson--Bloch conjecture for $Y_\eta$ (proved in theorem \ref{thmBBCubicThreefold}) is equivalent to the Birch and Swinnerton-Dyer conjecture for $A$. The latter is in turn equivalent via the main theorem of \cite{KT2003} to the strong Birch and Swinnerton-Dyer conjecture for $A$, which asserts that the Tate--Shafarevich group of $A$ is finite, that the $L$-function of $A$ vanishes at $s = 1$ to order $\rk A(K)$, and that the leading Taylor coefficient of the $L$-function satisfies the expected formula. 
    \par
    Let us collect a few other observations about the arithmetic of $A$, which is largely governed by the monodromy of the corresponding Lefschetz pencil $f$.
    \begin{prop}\label{propArithA}
        The following are true.
        \begin{enumerate}[(i)]
            \item The $L$-function of $A$ is
            \begin{gather*}
                L(A, s) = \frac{\chi(H^4(Y_{\ol{k}}, \Q_\ell(1)), s)}{(1-q^{1-s})^2}.
            \end{gather*}
            \item The base change of $A$ to $\ol{K}$ has endomorphism algebra isomorphic to $\mathbb{Q}$, and is not isogenous to an abelian variety defined over $\ol{k}$.
            \item The conductor of $A$ is the sum in $\Div(\mathbb{P}^1_k)$ of the points over which $f: Y \to \mathbb{P}^1_k$ is not smooth. In particular, $A$ has everywhere semistable reduction.
        \end{enumerate}
    \end{prop}
    \begin{proof}
        Claim (i) is immediate from proposition \ref{propLefschetzL} and the Betti numbers of cubic hypersurfaces. The \enquote{big} geometric monodromy of the Tate module of $A$ (propositions \ref{propVanSpan} and \ref{propOpenImage}), together with the Tate conjecture for endomorphisms of abelian varieties (\cite{Tate1966}, \cite{Faltings1986}, \cite{Zarhin1974a}, \cite{Zarhin1974b}, \cite{MB1985}), yields (ii).
        \par
        Finally, the conductor of $A$ is computed directly from proposition \ref{propInertia}, as follows. Fix $t \in \mathbb{P}^1_{\F_q}$ with $Y_t$ singular, and $\ol{t} \to t$ a geometric point. Since the computation of the conductor exponent at $t$ is entirely local, we will elide any discussion of paths, and simply fix a vanishing cycle $\delta \in H^3(Y_{\ol{\eta}}, \Q_\ell(1))$ for $\ol{t}$. Propositions \ref{propVanSpan} and \ref{propVanTran} imply that $\delta$ is nonzero. The inertia $I_{\ol{t}}$ acts tamely on the rational $\ell$-adic Tate module $V_\ell(A) \cong H^3(Y_{\ol{\eta}}, \Q_\ell(1))^\vee$, by proposition \ref{propInertia}, and the inertia-invariant subspace is the orthogonal complement of $\delta$, so the conductor exponent of $V_\ell(A)$ at $t$ is
        \begin{gather*}
            \dim_{\Q_\ell} V_\ell(A) - \dim_{\Q_\ell} V_\ell(A)^{I_{\ol{t}}} = 1.
        \end{gather*}
    \end{proof}
    The Birch and Swinnerton-Dyer conjecture in positive characteristic is known for isotrivial abelian varieties \cite[Theorem 3]{Milne1968} and for abelian varieties of analytic rank zero (for Jacobians over global function fields, combining \cite[Formula (2.9.3)]{Ulmer2011} with the arguments of \cite[Section 4]{Tate1966b} shows that analytic rank upper bounds algebraic rank; the extension of this inequality to general abelian varieties is well-known folklore). The above proposition thus indicates that the conjecture for $A$ is \enquote{hard}, in that it is not implied by either of these criteria. This may be unsurprising, since we used the highly nontrivial Tate conjecture for cubic fourfolds in order to prove it.

    \begin{exmp}\label{exmpExplicitL}
        We explicitly compute the $L$-function of $A$ when $X$ is the Fermat cubic fourfold. Let $\Delta$ be the base locus of $f$, and for simplicity assume that $\Delta$ is isomorphic to a blowup of $\mathbb{P}^2$ at six points (which can be guaranteed after passing to a finite extension of $\F_q$). Using the blowup formula for motives and proposition \ref{propArithA}, the $L$-function of $A$ can now be calculated directly from the zeta function of $X$. The zeta functions of Fermat varieties were expressed by Weil in terms of Jacobi sums \cite[Section 2.3]{KS79}. In particular, when $p \equiv 2$ modulo 3, $H^4(X_{\ol{\F}_q}, \Q_\ell(2))$ is generated by algebraic cycles defined over $\F_q$ (loc. cit., Theorem 2.10), so that
        \begin{align*}
            \chi(H^4(X_{\ol{\F}_q}, \Q_\ell(1)), s) &= (1-q^{1-s})^{23},\\
            \chi(H^4(Y_{\ol{\F}_q}, \Q_\ell(1)), s) &= (1-q^{1-s})^{30},\\
            L(A, s) &= (1-q^{1-s})^{28}.
        \end{align*}
        In this case, the Taylor expansion of the $L$-function at $s = 1$ is
        \begin{gather*}
            L(A, s) = (\ln(q) (s-1))^{28} + O((s-1)^{29}),
        \end{gather*}
        and the Mordell--Weil rank of $A$ is 28. The leading Taylor coefficient suggests that the BSD invariants of this $A$ are as simple as possible.
    \end{exmp}

    The results presented thus far in this section have analogues in higher dimension. Let $V$ be a smooth cubic sixfold over $k$, and let $g: W \to \mathbb{P}^1_k$ be a Lefschetz pencil for $V$. Then $W_{\eta}$ is a smooth cubic fivefold over $K$. By corollary \ref{corH5Cubic}, there exists an abelian variety $B$ over $K$ such that $\mf{h}^5(W_\eta) \cong \mf{h}^1(B)(-2)$. Proposition \ref{propFd} thus implies that $\BB^3(W_\eta)$ is equivalent to the Birch and Swinnerton-Dyer conjecture for $B$. In particular, if $V$ satisfies the hypotheses of theorem $\ref{thmBB3Cubic}$, then BSD holds for $B$.
    \par
    The proof of the next proposition is identical to that of proposition \ref{propArithA}, and omitted.
    \begin{prop}\label{propArithB}
        The following are true.
        \begin{enumerate}[(i)]
            \item The $L$-function of $B$ is
            \begin{gather}\label{eqLB}
                L(B, s) = \frac{\chi(H^6(W_{\ol{k}}, \Q_\ell(2)), s)}{(1-q^{1-s})^2}.
            \end{gather}
            \item The base change of $B$ to $\ol{K}$ has endomorphism algebra isomorphic to $\mathbb{Q}$, and is not isogenous to an abelian variety defined over $\ol{k}$.
            \item The conductor of $B$ is the sum in $\Div(\mathbb{P}^1_k)$ of the points over which $g: W \to \mathbb{P}^1_k$ is not smooth. In particular, $B$ has everywhere semistable reduction.
        \end{enumerate}
    \end{prop}
    \begin{exmp}
        Suppose that $V$ is the Fermat cubic sixfold, and $\character(k) \equiv 2 \bmod 3$. Then $V$ satisfies the hypotheses of theorem \ref{thmBB3Cubic}, and the cohomology of $V$ is generated by algebraic cycles defined over $\F_q$ \cite[Theorem 2.10]{KS1983}. Explicitly computing the $L$-function of $B$ in this case is less straightforward than in example \ref{exmpExplicitL}, because $W$ is the blowup of $V$ along a smooth cubic fourfold $Q$, whose contribution to the numerator of \eqref{eqLB} is nontrivial to write down. Nevertheless, we have 
        \begin{gather*}
            \dim_{\Q_\ell} H^6(V_{\ol{k}}, \Q_\ell(2)) + 1 = 88 \le  \rk B(K) + 2 \le \dim_{\Q_\ell} H^6(W_{\ol{k}}, \Q_\ell(2)) = 110,
        \end{gather*}
        where the $+ \ 1$ on the left comes from the self-intersection of an ample divisor on $Q$.
    \end{exmp}
    
    We conclude by proving a case of the Tate conjecture. Let $F$ be the Fano surface of $Y_\eta$, let $\mc{F} \to \mathbb{P}^1_k$ be a flat projective morphism with generic fiber $F$, and let $\rho: T \to \mc{F}$ be a resolution of singularities of $\mc{F}$, such that $\rho$ is a projective morphism. Such a resolution exists by \cite[Theorem 2.1]{CP2008}.

    \begin{thm}\label{thmTate}
        The Tate conjecture holds for $T$.
    \end{thm}
    \begin{proof}
        The Tate conjecture holds for $F$ (see theorem \ref{thmFanoAbType}), and the Birch and Swinnerton-Dyer conjecture holds for $A = \Alb(F)$, so the Tate conjecture holds for divisors on $T$ by \cite[Theorem 1.1]{Geisser2021}. The hard Lefschetz theorem \cite[Th\'eor\`eme 4.1.1]{Deligne1980} in turn implies the full Tate conjecture for $T$.
    \end{proof}
    
\printbibliography
\end{document}